\newtheorem{theorem}{Theorem}[section]
\newtheorem*{acknowledgement*}{\protect\acknowledgementname}
\newaliascnt{setup}{theorem}
\newtheorem{setup}[setup]{Setup}
\newaliascnt{question}{theorem}
\newaliascnt{lemma}{theorem}
\newtheorem{lemma}[lemma]{Lemma}
\newaliascnt{conjecture}{theorem}
\newaliascnt{proposition}{theorem}
\newtheorem{proposition}[proposition]{Proposition}
\newaliascnt{corollary}{theorem}
\newaliascnt{problem}{theorem}
\newaliascnt{claim}{theorem}
\theoremstyle{definition}
\newaliascnt{definition}{theorem}
\newtheorem{definition}[definition]{Definition}
\newaliascnt{example}{theorem}
\theoremstyle{remark}
\newaliascnt{remark}{theorem}
\newtheorem{remark}[remark]{Remark}
\newaliascnt{remarks}{theorem}
\def\({$($}
\def\){$)$}
\def\rank{\text{{\rm rank\,}}}
\def\Spec{\textrm{Spec}}
\def\Spf{\textrm{Spf}}
\def\et{\textrm{\'et}}
\providecommand{\acknowledgementname}{Acknowledgement}
\def\Y{{\mathcal Y}}
\def\bL{{\mathbb L}}
\def\YK{{\mathcal Y_K}}
\def\Fil{{\mathrm{Fil}}}
\def\Hom{{\mathcal{H}\mathrm{om}}}
\newcommand \Ainf[1]{\textbf{A}_{\textrm{inf}}(#1)}
\newcommand \Acris[1]{\textbf{A}_{\textrm{cris}}(#1)}
\newcommand \MFbig[1]{\mathcal{MF}^{\text{tor}}_{\text{big}}(#1)}
\newcommand \MF[1]{\mathcal{MF}^{\text{tor}}(#1)}
\newcommand \Pcris[1]{\mathbb{P}_{\text{cris}}(#1)}
\numberwithin{equation}{section}
\begin{document}
\title{Logarithmic Crystalline Representations}

\author{Zhenmou Liu}
\email{zhenmouliu@mail.ustc.edu.cn}
\address{School of Mathematical Sciences, University of Science and Technology of China, Hefei, Anhui 230026, PR China}

\author{Jinbang Yang}
\email{yjb@mail.ustc.edu.cn}
\address{School of Mathematical Sciences, University of Science and Technology of China, Hefei, Anhui 230026, PR China}

\author{Kang Zuo}
\email{zuok@uni-mainz.de}
\address{School of Mathematics and Statistics, Wuhan University, Luojiashan, Wuchang, Wuhan, Hubei, 430072, P.R. China.}
\address{Institut f\"ur Mathematik, Universit\"at Mainz, Mainz 55099, Germany}

\maketitle

\begin{abstract}
In 1989, Faltings proved the comparison theorem between \'etale cohomology and crystalline cohomology by studying Fontaine-Faltings modules and crystalline representations. In his paper, he mentioned these modules and representations can be extended to the logarithmic context, but without detail. This note aims to explicitly present the construction of logarithmic Fontaine-Faltings modules and logarithmic crystalline representations.
\end{abstract}

\tableofcontents

\section*{Introduction}
Let $X$ be a complex manifold. A \emph{complex polarized variation of Hodge structures} on $X$ is a quadruple $(H,\nabla, \Fil,\varphi)$ that consists of a flat holomorphic bundle $(H,\nabla)$ over $X$, a decreasing filtration $\Fil$ of
holomorphic subbundles satisfying the Griffiths transversality with respect to $\nabla$, and
a horizontal bilinear form $\varphi$ satisfying the Hodge-Riemann bilinear relation.

Let $k$ be a perfect field of odd characteristic and $Y$ a smooth scheme over $W:=W(k)$ with geometrically connected generic fiber and $p$-adic formal completion $\mathcal Y$. As a $p$-adic analogue of complex polarized variations of Hodge structures, Fontaine-Laffaille \cite{FoLa82} and Faltings \cite{Fal89} introduced the category $\mathcal{MF}^{\nabla}_{[0, p-2]}(\mathcal{Y}/W)$, whose objects are called the \emph{Fontaine-Faltings modules}.
A Fontaine-Faltings module is also a quadruple of the form $(H,\nabla,\Fil,\varphi)$ which is similar to a complex polarized variation of Hodge structures, except to replace the bilinear form $\varphi$ with a semi-linear map with respect to the local lifting of Frobenius.
Faltings constructed a fully faithful functor from the category of Fontaine-Faltings modules to the category of representations of the \'etale fundamental group of the generic fiber $\mathcal Y_K$ of $\mathcal{Y}$. This functor can be regarded as a $p$-adic analogue of the Riemann-Hilbert correspondence. The representations in the essential image of this functor are called \emph{crystalline representations}.

Faltings claimed in \cite[p.43]{Fal89} that the theory of crystalline representations extends to the logarithmic context, but without detail.
As an application, he generalized his comparison theorem between \'etale cohomology and crystalline cohomology to logarithmic case.
In this paper, we will explicitly write out the construction of logarithmic Fontaine-Faltings modules and logarithmic $\mathbb D^{\log}$-functor.
Let $Z\subseteq Y$ be a normal crossing divisor of $Y$ with $p$-adic formal completion $\mathcal Z$ and $U=Y-Z$ with $p$-adic formal completion $\mathcal U$.
Roughly speaking, a logarithmic Fontaine-Faltings module over $(\mathcal Y,\mathcal Z)$ is a Fontaine-Faltings module whose connection is replaced by an integrable logarithmic connection.
The crucial aspect of our generalization lies in the construction of the gluing isomorphism of local logarithmic Fontaine-Faltings modules.
Instead of constructing the logarithmic $\mathbb D^{\log}$ directly, we notice that for a $p$-torsion logarithmic Fontaine-Faltings module $M$, the logarithmic pole of its pullback along some \'etale cover vanishes.
Applying Faltings' $\mathbb D$-functor on its pullback, one gets a crystalline local system on the cover, which can be descended to $\mathcal Y_K^\circ:=\mathcal Y_K-\mathcal Z_K$, where $\mathcal Z_K$ is the generic fiber of $\mathcal Z$.
We can prove that
\begin{theorem}[\autoref{algebraicalize}]
We keep the same notation as in \autoref{setup:log_FF}.
\begin{enumerate}
\item
There is a functor $\mathbb D^{\log}$ from the category of logarithmic Fontaine-Faltings modules over $(\mathcal Y,\mathcal Z)$ to the category of continuous representations of $\pi_1^{\et}(\mathcal Y_K^\circ)$. This functor is compatible with usual Faltings' $\mathbb D$-functor in the following sense: for any logarithmic Fontaine-Faltings modules $M$ over $(\mathcal Y,\mathcal Z)$, there is a canonical isomorphism of continuous representation of $\pi_1^{\et}(\mathcal U_K)$:
\[\mathbb D^{\log}(M)\mid_{\mathcal U_K}\cong \mathbb D(M\mid_{\mathcal U}).\]

\item
If $Y$ is proper over $W$, then the functor in (1) can be algebraized  into a functor from the category of logarithmic Fontaine-Faltings modules over $(\mathcal Y,\mathcal Z)$ to the category of continuous representations of $\pi_1^{\et}(Y_K^\circ)$.
\end{enumerate}
\end{theorem}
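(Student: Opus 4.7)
The plan is to follow the strategy sketched in the introduction: reduce to the $p$-torsion case by inverse limits, pass to an \'etale cover of $\mathcal Y_K^\circ$ which trivializes the logarithmic pole of the pullback of $M$, apply Faltings' non-logarithmic $\mathbb D$-functor on this cover, and then descend the resulting crystalline local system back to a representation of $\pi_1^{\et}(\mathcal Y_K^\circ)$.

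Concretely, starting from a $p^n$-torsion logarithmic Fontaine-Faltings module $M$ over $(\mathcal Y,\mathcal Z)$, I would first work locally. Near a component of $\mathcal Z$ with local equation $t=0$, the residue of the logarithmic connection has eigenvalues constrained by the Fontaine-Faltings structure to lie in a prescribed finite subset of $\mathbb Z/p^n$. A suitable Kummer-type cover along each branch of $\mathcal Z$, of the form $t\mapsto t^{p^n}$, is \'etale over the generic fiber $\mathcal Y_K^\circ$ (because $p^n$ is a unit in $K$ and $t\neq 0$ there); after such a base change the residues of the pulled-back connection get multiplied by $p^n$ and hence act trivially on the $p^n$-torsion pullback of $M$. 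The pullback therefore extends to an honest (non-logarithmic) Fontaine-Faltings module on the cover, to which the usual $\mathbb D$-functor applies to yield a finite \'etale $p^n$-torsion local system.

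The main obstacle is the descent step back to $\mathcal Y_K^\circ$, which the authors explicitly flag as the crucial aspect of the generalization. I would arrange the trivializing cover to be Galois over $\mathcal Y^\circ$ with Galois group $G$; the pullback of $M$ is then canonically $G$-equivariant, and functoriality of $\mathbb D$ transports this equivariance to a $G$-action on the associated local system that is compatible with the $G$-action on the base. \'Etale descent along the Galois cover of generic fibers then yields the desired representation of $\pi_1^{\et}(\mathcal Y_K^\circ)$. Independence of the choice of trivializing cover follows by passing to a common Galois refinement and comparing via functoriality; functoriality in $M$ is then formal, and taking the inverse limit over the quotients $M/p^n M$ gives the continuous representation in general.

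Compatibility with the ordinary $\mathbb D$-functor on $\mathcal U_K$ is essentially by construction: over $\mathcal U$ the Kummer cover is already \'etale and the log pole is empty, so both constructions amount to applying $\mathbb D$ on a Galois cover of $\mathcal U_K$ and descending, and these descents agree by functoriality. For part (2), assuming $Y$ proper over $W$, I would identify $\pi_1^{\et}(\mathcal Y_K^\circ)$ with $\pi_1^{\et}(Y_K^\circ)$: rigid-analytic GAGA for proper $Y$ gives an equivalence between the finite \'etale sites of the rigid-analytic and algebraic generic fibers, and this restricts to the complements of $\mathcal Z_K$ and $Z_K$ respectively. The functor from (1) then factors through this identification to produce the algebraic functor asserted in (2).
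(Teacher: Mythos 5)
Your overall strategy (reduce to $p^n$-torsion, kill the logarithmic pole by a Kummer-type cover adjoining $p^n$-th roots of the local equations of $\mathcal Z$, apply the non-logarithmic $\mathbb D$-functor, then descend) is the same as the paper's, but your descent step has a genuine gap. You propose to ``arrange the trivializing cover to be Galois over $\mathcal Y^\circ$ with Galois group $G$'' and then do Galois descent using the $G$-equivariance of the pulled-back module. However, the trivializing cover $\mathcal Y_n=\mathrm{Spf}\,\widehat R[T_1^{1/p^n},\dots,T_s^{1/p^n}]^\wedge$ is \emph{not} Galois over $\mathcal Y^\circ$: the deck transformations require $p^n$-th roots of unity, and $\zeta_{p^n}\notin K=\mathrm{Frac}\,W(k)$ since $K$ is absolutely unramified. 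Passing to the Galois closure forces you to adjoin $\zeta_{p^n}$, i.e.\ to make a ramified base change, and then the cover is no longer (the generic fiber of) a smooth formal $W$-scheme, so neither the notion of a Fontaine--Faltings module nor Faltings' $\mathbb D$-functor (nor its compatibility with pullback, \autoref{prop_PullbackDfunctor}) applies there; in particular ``the pullback of $M$ is canonically $G$-equivariant and functoriality of $\mathbb D$ transports this'' is not available as stated, and constructing the $G$-action on the local system over the rigid Galois closure (including the cyclotomic part of $G$) is exactly the nontrivial content you would still have to supply. The paper avoids Galois descent altogether: it uses the coset/pushout lemma (\autoref{lem_GroDeo}) to extend the $\pi_1^{\et}(\mathcal U_K)$-action on $\mathbb D(M_n)\cong\mathbb D(M/p^nM\mid_{\mathcal U})$, together with the $\pi_1^{\et}(\mathcal Y_{n,K}^\circ)$-action coming from the fact that $\mathbb D(M_n)$ lives on all of $\mathcal Y_{n,K}$, to an action of $\pi_1^{\et}(\mathcal Y_K^\circ)$ along the non-Galois open subgroup $\pi_1^{\et}(\mathcal Y_{n,K}^\circ)$. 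Relatedly, your write-up treats the trivializing cover as if it were global, but the Kummer construction depends on a choice of local coordinates and only exists chart by chart; one still needs the coordinate-independence and cocycle statement (\autoref{lem_Gluecocycle}) and the rigid-analytic gluing of the resulting finite \'etale covers over a finite admissible cover of $\mathcal Y_K^\circ$ (via \cite[Theorem 4.2.9]{Con06}, using ample line bundles), which your ``common Galois refinement'' remark does not supply.

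For part (2) your justification is also too quick: $Y_K^\circ$ is not proper, and an equivalence of finite \'etale sites for the proper $Y_K$ does not formally ``restrict to the complements'' $Y_K^\circ$ and $\mathcal Y_K^\circ$ --- algebraicity of a finite \'etale cover of the open rigid space is precisely the $p$-adic Riemann existence theorem of L\"utkebohmert \cite[Theorem 3.1]{Lut93}, which the paper invokes (together with $(Y_K^\circ)^{\mathrm{an}}=\mathcal Y_K^\circ$ from \cite[Theorem A.3.1]{Con99}) and which is a substantive theorem, not a consequence of proper GAGA.
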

The $\mathbb D^{\log}$-functor is also fully faithful (by \autoref{prop_FullyFaithful}) as the same as the Faltings' $\mathbb D$-functor in the non-logarithmic case.

The paper is structured as follows. In the first section, we review some basic facts of Fontaine-Faltings modules and Faltings' $\mathbb D$-functor. Inspired by pullback of complex variation of Hodge structure, we introduce the notion of pullback of Fontaine-Faltings modules in \autoref{sec_PullbackFF}, and then we prove that the pullback of Fontaine-Faltings modules is preserved by the Faltings' $\mathbb D$-functor.

The second section is dedicated to extend the notion of Fontaine-Faltings modules and Faltings' $\mathbb D$-functor to logarithmic case. We first introduce the concept of logarithmic Fontaine-Faltings modules. Subsequently, we construct the local logarithmic $\mathbb D^{\log}$-functor. This construction is analogous to the local $\mathbb D$-functor in \cite{Fal89}. By gluing the local logarithmic $\mathbb D^{\log}$-functors, we construct the global logarithmic $\mathbb D^{\log}$-functors, whose restriction on $\mathcal U$ coincides with the Faltings' $\mathbb D$-functor. By applying fact that the Faltings' $\mathbb D$-functor is fully faithful, we proof that the global $\mathbb D^{\log}$ is also fully faithful.

\vspace{5mm}

\paragraph{\textbf{Notation.}}In this paper, $k$ is a perfect field of characteristic $p>2$, $W:=W(k)$ is the ring of Witt vectors over $k$ with fraction field $K={\rm Frac}\,W$.

\vspace{5mm}

\paragraph{\textbf{Acknowledgments.}}
This paper was inspired by the suggestions of an anonymous referee on a paper co-authored by Raju Krishnamoorthy, the second, and the third authors. We express our heartfelt gratitude to the referee.
We are deeply indebted to Raju Krishnamoorthy for his valuable improvements to the initial version of this manuscript. His insights and contributions have been instrumental in shaping and advancing this work, and we hereby express our most sincere gratitude to him.
J.Y. and Z.L. acknowledge the financial support from the National Natural Science Foundation of China (Grant No. 12201595), the Fundamental Research Funds for the Central Universities, and the CAS Project for Young Scientists in Basic Research (Grant No. YSBR-032). K.Z. and J.Y. gratefully acknowledge the financial support from the Key Program (Grant No. 12331002) and the International Collaboration Fund (Grant No. W2441003) of the National Natural Science Foundation of China.

\section{Crystalline representations}
\label{section_FFM}

In this section, we review the theory of crystalline local systems.
We follow closely (both in notation and in exposition) the foundational work of Faltings \cite{Fal89}.
See also \cite[Section 1]{SYZ22} and \cite[Section 2]{LSZ13a}.
For a longer treatise with all of the details, see the recent \cite[Sections 2-4]{Tsu20}.
(This source proves in detail many of the necessary statements in commutative algebra to set up the period rings.
It also reproves several of Faltings' theorems using the recent $\mathbb{A}_{\text{inf}}$ technology.)

\begin{setup}
\label{setup:scheme_rigid1}
Let $Y$ be a smooth separated scheme over $W$ (not necessary projective) with geometrically connected generic fiber.
Denote by $\Y$ the $p$-adic formal completion of $Y$ along the special fiber $Y_1$ and by $\YK$ the rigid-analytic space associated to $\Y$,
which is an open subset of $Y_K^{\rm an}$.\footnote{This is implied by \cite[Theorem A.3.1]{Con99}, and when $Y/W$ is proper, $\mathcal Y_K=Y_K^{\rm an}$.}
\end{setup}

Let $(V,\nabla)$ be a \emph{de Rham sheaf}
(i.e., a sheaf with an integrable connection)
over $Y_n:=Y\times_{\mathrm{Spec}(W)}\mathrm{Spec}(W_n)$.
In this paper, a filtration $\Fil$ on $(V,\nabla)$ will be called a
\emph{Hodge filtration of level in $[a,b]$}
if the following conditions hold:

\begin{itemize}
\item[-] $\Fil^i V$'s are locally split sub-sheaves of $V$, with
\[V=\Fil^aV\supset \Fil^{a+1}V \supset\cdots \supset \Fil^bV\supset \Fil^{b+1}V=0,\]
where every point has a neighborhood $U\subset Y_n$ such that each graded factor $\Fil^i V(U)/\Fil^{i+1} V(U)$ is a finite direct sum of $\mathcal O_{Y_n}(U)$-modules of the form $\mathcal O_{Y_n}(U)/p^e$.

\item[-] $\Fil$ satisfies Griffiths transversality with respect to the connection $\nabla$.
\end{itemize}
In this case, the triple $(V,\nabla,\Fil)$ is called a
\emph{filtered de Rham sheaf}.
Taking limits, one easily defines a (filtered) de Rham sheaf over the formal smooth scheme $\mathcal Y$.

\subsection{Fontaine-Faltings modules over a small affine base.}
\label{sect:FF/small}

We first recall the notion of a Fontaine-Faltings module over a small affine scheme.
Assume $Y$ is a connected, \emph{small} affine scheme,
i.e., $Y$ is connected and if $Y=\mathrm{Spec}(R)$,
then there exists an \'etale map
\[W[T_1^{\pm1},T_2^{\pm1},\cdots, T_{d}^{\pm1}]\rightarrow R,\]
over $W$ (see \cite[p. 27]{Fal89}).
In general, a smooth affine scheme over $W$ is not always small but it can be covered by a system of small affine open subsets.
By the existence of the \'etale chart there exists some $\Phi:\widehat{R}\rightarrow\widehat{R}$ which lifts the absolute Frobenius on $R/pR$, where $\widehat{R}$ is the $p$-adic completion of $R$.

A \emph{Fontaine-Faltings module} over the $p$-adic formal completion $\mathcal Y=\mathrm{Spf}(\widehat{R})$ of $Y$ with Hodge-Tate weights in $[a,b]$ is a quadruple $(V,\nabla,\Fil,\varphi)$, where
\begin{itemize}
\item[-] $(V,\nabla)$ is a de Rham $\widehat{R}$-module;
\item[-] $\Fil$ is a Hodge filtration on $(V,\nabla)$ of level in $[a,b]$;
\item[-] $\widetilde{V}$ is the quotient $\bigoplus\limits_{i=a}^b\Fil^i/\sim$. Here, $px\sim y$ for $x\in\Fil^iV$ where $y$ is the image of $x$ under the natural inclusion $\Fil^iV\hookrightarrow\Fil^{i-1}V$;
\item[-] $\varphi$ is an $\widehat{R}$-linear isomorphism \[\varphi:\widetilde{V}\otimes_{\Phi}\widehat{R} \longrightarrow V,\]
\item[-] The relative Frobenius $\varphi$ is horizontal with respect to the connections.
\end{itemize}
(The fact that $\varphi$ is an isomorphism is sometimes known as \emph{strong $p$-divisibility.}) A morphism between Fontaine-Faltings modules is a morphism between the underlying de Rham modules which is strict for the filtrations and commutes with the $\varphi$-structures. Denote by $\mathcal {MF}_{[a,b]}^{\nabla,\Phi}(\mathcal Y/W)$ the category of all Fontaine-Faltings modules over $\mathcal Y$ of Hodge-Tate weights in $[a,b]$. The $p$-primary torsion version of this definition was first written down in \cite[p. 30-31]{Fal89}; here we follow \cite[Section 3]{Fal99}, see also \cite[Section 2]{SYZ22} and \cite[Section 2]{LSZ13a}.

\paragraph{\emph{The gluing functor.}} In the following, we recall the gluing functor of Faltings. In other words, up to a canonical equivalence of categories, if $b-a\leq p-2$, the category $\mathcal {MF}_{[a,b]}^{\nabla,\Phi}(\mathcal Y/W)$ does not depend on the choice of $\Phi$. More explicitly, the functor yielding an equivalence is given as follows. Let $\Psi$ be another lifting of the absolute Frobenius. For any filtered de Rham module $(V,\nabla,\Fil)$, Faltings~\cite[Theorem~2.3]{Fal89} shows that there is a canonical isomorphism by the Taylor formula
\[\alpha_{\Phi,\Psi}: \widetilde{V}\otimes_\Phi\widehat{R} \simeq \widetilde{V}\otimes_\Psi \widehat{R},\]
which is \emph{compatible} with respect to the connection, satisfies the cocycle conditions and induces an equivalence of categories
\begin{equation}
\xymatrix@R=0mm{ \mathcal {MF}_{[a,b]}^{\nabla,\Psi}(\mathcal Y/W)\ar[r] & \mathcal {MF}_{[a,b]}^{\nabla,\Phi}(\mathcal Y/W).\\
(V,\nabla,\Fil,\varphi)\ar@{|->}[r] & (V,\nabla,\Fil,\varphi\circ\alpha_{\Phi,\Psi})\\}
\end{equation}

\subsection{Fontaine-Faltings modules over a global base.}
\label{sect:FF/gl}

In this section, we do not assume $Y$ is small,
but we maintain the assumption that $Y$ has a geometrically connected generic fiber.
Let $\{\mathcal U_i\}_{i\in I}$ be a connected small affine open covering of $\mathcal Y$
with $\Phi_i$ a lift of the absolute Frobenius on $\mathcal O_\Y(\mathcal U_i)\otimes_W k$ for any $i\in I$.
Recall that the category $\mathcal {MF}_{[a,b]}^{\nabla}(\Y/W)$ is constructed by gluing the categories $\mathcal {MF}_{[a,b]}^{\nabla,\Phi_i}(\mathcal U_i/W)$.
Actually $\mathcal {MF}_{[a,b]}^{\nabla}(\mathcal Y/W)$ can be described more precisely as follows.
A Fontaine-Faltings module over $\Y$ of Hodge-Tate weights in $[a,b]$ is a tuple $(V,\nabla,\Fil,\{\varphi_i\}_{i\in I})$,
i.e., a filtered de Rham sheaf $(V,\nabla,\Fil)$ over $\mathcal Y$ together with
$\varphi_i: \widetilde{V}(\mathcal U_i)\otimes_{\Phi_i} \widehat{\mathcal O_\Y(\mathcal U_i)}\rightarrow V(\mathcal U_i)$
such that

\begin{itemize}
\item[-] $M_i:=(V(\mathcal U_i),\nabla,\Fil,\varphi_i)\in \mathcal {MF}_{[a,b]}^{\nabla,\Phi_i}(\mathcal U_i/W)$.

\item[-] For all $i,j\in I$, on the open intersection $\mathcal U_i\cap \mathcal U_j$, the Fontaine-Faltings modules $M_i\mid_{\mathcal U_{i}\cap \mathcal U_j}$ and $M_j\mid_{\mathcal U_{i}\cap \mathcal U_j}$ are associated to each other under the above equivalence of categories with respect to the two Frobenius liftings $\Phi_i$ and $\Phi_j$ on $\mathcal U_i\cap \mathcal U_j$.
\end{itemize}
Denote by $\mathcal {MF}_{[a,b]}^{\nabla}(\Y/W)$ the category of all Fontaine-Faltings modules over $\Y$ of Hodge-Tate weights in $[a,b]$.

\subsection{Pullback of Fontaine-Faltings modules}
\label{sec_PullbackFF}

Let $Y'$ be another smooth separated scheme over $W$ with geometrically connected generic fiber and $f:Y'\to Y$ a morphism between $W$-schemes.
For any $M=(V,\nabla,\Fil,\{\varphi_i\}_{i\in I})\in \mathcal {MF}_{[a,b]}^{\nabla}(\Y/W)$, we construct its pullback along $f$ as follows.

Let $\{\mathcal U'_{i'}\}_{i'\in I'}$ be a connected small affine open covering of $\mathcal Y'$
with $\Phi'_{i'}$ a lift of the absolute Frobenius on $\mathcal O_{\Y'}(\mathcal U'_{i'})\otimes_W k$ for any $i\in I'$.
Let
\[(V',\nabla',\Fil'):=f^*(V,\nabla,\Fil).\]
Assume that $f(\mathcal U'_{i'})\subseteq \mathcal U_i$ and $f_{i'}:\mathcal O_{\mathcal Y}(\mathcal U_i)\to \mathcal O_{\mathcal Y'}(\mathcal U'_{i'})$ is the corresponding map.
Since $\Phi'_{i'}\circ f_{i'}$ and $f_{i'}\circ \Phi_i$ coincide modulo $p$,
by \cite[Theorem 2.3]{Fal89}, there is an isomorphism
\[\alpha_{\Phi'_{i'},\Phi_i}:\left(\widetilde{V}(\mathcal U _i)\otimes \mathcal O_{\mathcal Y'}(\mathcal U'_{i'})\right)\otimes_{\Phi'_{i'}} \mathcal O_{\mathcal Y'}(\mathcal U'_{i'})\xrightarrow{\sim} \left(\widetilde{V}(\mathcal U _i)\otimes_{\Phi_i} \mathcal O_{\mathcal Y}(\mathcal U_i)\right)\otimes \mathcal O_{\mathcal Y'}(\mathcal U'_{i'}).\]
Hence one obtains an object $$(f^*M)_{i'}:=(V'(\mathcal U'_{i'}),\nabla',\Fil',\varphi'_{i'}) \in \mathcal {MF}_{[a,b]}^{\nabla,\Phi'_{i'}}(\mathcal U'_{i'}/W)$$ by setting $\varphi'_{i'}$ to be the map satisfying the following diagram:
\begin{equation*}
\xymatrix@C=2cm{
\left(\widetilde{V}(\mathcal U _i)\otimes \mathcal O_{\mathcal Y'}(\mathcal U'_{i'})\right)\otimes_{\Phi'_{i'}} \mathcal O_{\mathcal Y'}(\mathcal U'_{i'})
\ar[r]_{\alpha_{\Phi'_{i'},\Phi_i}}
\ar[dd]_{\sim}
&\left(\widetilde{V}(\mathcal U _i)\otimes_{\Phi_i} \mathcal O_{\mathcal Y}(\mathcal U_i)\right)\otimes \mathcal O_{\mathcal Y'}(\mathcal U'_{i'})
\ar[d]^{\varphi_i\otimes 1} \\
& V(\mathcal U_i)\otimes \mathcal O_{\mathcal Y'}(\mathcal U'_{i'})\ar[d]^{\sim}\\
\widetilde{V}'(\mathcal U'_{i'})\otimes_{\Phi'_{i'}} \mathcal O_{\mathcal Y'}(\mathcal U'_{i'})\ar[r]^{\varphi'_{i'}}
&V'(\mathcal U'_{i'})
}.
\end{equation*}
The family of local Fontaine-Faltings modules $\{(f^*M)_{i'}\}_{i'\in I'}$ is compatible and the compatibility of $\{\varphi'_{i'}\}_{i'\in I'}$ follows from
\[\varphi'_{j'}\circ \alpha_{\Phi'_{i'},\Phi'_{j'}}=(\varphi_j\otimes 1)\circ \alpha_{\Phi'_{j'},\Phi_j}\circ \alpha_{\Phi'_{i'},\Phi'_{j'}}=(\varphi_j\otimes 1)\circ \alpha_{\Phi_i,\Phi_j}\circ \alpha_{\Phi'_{i'},\Phi_i}=(\varphi_i\otimes 1)\circ \alpha_{\Phi'_{i'},\Phi_i}=\varphi'_{i'}\]
on $\mathcal U'_{i'}\cap \mathcal U'_{j'}$, where the second equality follows from the cocycle condition which is verified in \cite[Theorem 2.3]{Fal89}, and the third equality comes from the compatibility of $\{\varphi_i\}_{i\in I}$. Thus we have defined an object $$f^*M:=(V',\nabla',\Fil',\{\varphi'_{i'}\}_{i'\in I'})\in \mathcal {MF}_{[a,b]}^{\nabla}(\mathcal Y'/W),$$ which is called the \textit{pullback of $M$ along $f$}.

\subsection{Two auxiliary categories}
\label{sect:aux_cats}

For the proofs, Faltings requires two auxiliary categories.
Again, suppose that $R/W$ is smooth (or a $p$-adic completion thereof).
Fix a Frobenius lift $\Phi\colon \hat{R}\rightarrow \hat{R}$.
(In our final application,
it will be convenient to assume that $\Phi$ is \'etale in characteristic $0$.)

Set $\MFbig{R}$ to be the following category.

\begin{itemize}
\item
An object is a $p$-primary torsion $R$-module $M$, a collection of $p$-primary torsion $R$-modules $F^i(M)$,
and a collection of $R$-linear maps $F^i(M)\rightarrow F^{i-1}(M)$, $F^i(M)\rightarrow M$, and $\varphi^i\colon F^i(M)\otimes _{R,\Phi} R\rightarrow M$
(or, equivalently, a $\Phi$-linear morphism $\varphi^i\colon F^i(M) \rightarrow M$)
satisfying the following conditions.
\begin{enumerate}

\item The composition $F^i(M)\rightarrow F^{i-1}(M)\rightarrow M$ is the map $F^i(M)\rightarrow M$.

\item The map $F^i(M)\rightarrow M$ is an isomorphism if $i\ll 0$.

\item The composition of $$F^i(M)\rightarrow F^{i-1}(M)\xrightarrow{\varphi^{i-1}} M$$
is $p\varphi^{i}$.
\end{enumerate}

\item Morphisms are $R$-linear maps that commute with all additional structure maps.
\end{itemize}

We define $\MF{R}$ to be the full subcategory where the $M$, $F^i(M)$ are all finitely generated $p$-primary torsion $R$-modules,
where $F^i(M)=(0)$ for all $i\gg 0$, and where a strong $p$-divisibility condition holds:
$\varphi$ induces an isomorphism $\tilde{M}\otimes_{R,\Phi} R\rightarrow M$ (see \cite[p. 30-31]{Fal89} for the definition of $\tilde{M}$).
We emphasize: the categories $\MFbig{R}$ and $\MF{R}$ are \emph{not} independent of the choice of Frobenius lift.
Nonetheless, they will be useful to us, especially in the context of small open affines.

We recall some basic properties.
\begin{itemize}
\item
Let $(M,F^i(M),\varphi)$ be an object of $\MF{R}$.
Then the maps $F^{i+1}(M)\rightarrow F^{i}(M)\rightarrow M$ are all injections onto direct summands (as $R$-modules), see \cite[Theorem 2.1 (i)]{Fal89}.

\item
Locally the underlying module $V$ of an object $M=(V,\Fil,\phi)$ is isomorphic to a direct sum of $R$-modules $R/p^eR$, see \cite[Theorem 2.1 (ii)]{Fal89};

\item
The category $\MF{R}$ is an abelian category and moreover, any map in $\MF{R}$ is \emph{strict} for the filtrations, see \cite[Corollary, Page 33]{Fal89};

\item
Let $(V,\nabla,\Fil,\phi)\in \mathcal{MF}_{[a,b]}^{\nabla}(\mathcal Y)$. Then for every $n\geq 1$, the triple $(V/p^n,\Fil,\phi)$ is an object in $\MF{R}$. This fact follows from the definition of Fontaine-Faltings module.
\end{itemize}
Note that once we restrict the underlying modules to be finitely generated, then by $p$-primary torsionness they are automatically $p$-adically complete. (In particular, there is an equivalence of categories between finitely generated $p$-primary torsion modules on $R$ and finitely generated $p$-primary torsion modules on $\hat{R}$.)

\subsection{The $\mathbb D$-functor of Fontaine-Laffaille-Faltings}
\label{section FDF}

We recall the Fontaine-Laffaille-Faltings $\mathbb D$-functor in this subsection.
Before doing this we need to recall the period ring $B^+(R)$ from \cite[p.27-28]{Fal89} for a smooth (or formally smooth) $W$-algebra $R$.
Denote by $\overline{R}$ the maximal extension of $R$ which is \'etale in characteristic zero.
More precisely, if $R$ is geometrically integral, set $\overline{R}$ to be the integral closure of $R$ inside of the maximal field extension of $\text{Frac}(R)$
(i.e., inside of a fixed algebraic closure $\overline{\text{Frac}(R)}$)
that is unramified over $R[1/p]$.
Consider the ring
$S = (\overline{R})^{\flat}:= \varprojlim(\overline{R}/p\overline{R})$,
where the limit is over a projective system of rings indexed by the number $n\geq0$ with transition maps given by Frobenius.

There is a natural surjective ring homomorphism
$$\theta\colon \Ainf{\bar{R}}:=W(S)\rightarrow \widehat{\overline{R}}$$
from the Witt vectors of $S$ to the $p$-adic completion of $\overline{R}$.
Denote by
$D_I(W(S)):=\text{DPE}(\theta\colon W(S)\rightarrow \widehat{\overline{R}})$
the divided power hull of $W(S)$ with respect to $I:=\text{ker}(\theta)$.
(As Faltings notes, $(p)W(S)$ already has divided powers,
so we could equivalently take the divided power hull with respect to $I+pW(S)$.)
Finally, denote by $B^+(R)$ the completion of $D_I(W(S))$ for the $p$-adic topology.\footnote{
Faltings defined the completion as by the divided power ideals $(I+(p))^{[n]}$. This seems to be an error; for sources with the correct definition, see e.g. \cite[p. 240]{Tsu99} or \cite[p. 167-168]{Tsu20}
}
(In other sources, $B^+(R)$ is sometimes written as $\Acris{\bar R}$ or a variant thereof.)
From the construction, the Galois group $\mathrm{Gal}\left(\overline{R}[\frac1p]/R[\frac1p]\right)$\footnote{
This group was originally denoted by $\mathrm{Gal}\left(\overline{R}/R\right)$ in \cite{Fal89}.
}
operates continuously\footnote{
The continuity was asserted in \cite[p.28]{Fal89}, whose proof is the same as the case considered by Fontaine \cite{Fon82,Fon82Coh}.
}
on $B^+(R)$.
We note that $B^+(R)$ is endowed with natural Frobenius and filtration structures, and the filtration structure is defined via divided power ideals.
More precisely, the Frobenius on $S=\overline{R}^{\flat}$ induces a canonical Frobenius on $\Ainf{\bar{R}}$,
which naturally extends to a Frobenius on both of the above overrings: $D_I(W(S))$ and $B^+(R)$.
The filtration on $D_I(W(S))$ is defined as follows:
$$\text{Fil}^n D_I(W(S)):=I^{[n]} \cap \text{ker}\big(D_I(W(S))\xrightarrow{Frob} D_I(W(S))\rightarrow D_I(W(S))/p^n\big),$$
and it is strongly $p$-divisible by construction.
The filtration on $B^+(R)$ is defined to be the closure of that on $D_I(W(S))$.
A useful principle (though not rigorous mathematics):
$B^{+}(R)$ ``contains'' all crystalline local systems on $R$.

We choose a compatible sequence of $p$-power roots
$\{\zeta_{p^n} | n\geq0\}$ with $\zeta_{1}=1$, $\zeta_{p}\neq 1$
and $\zeta_{p^{n+1}}^p=\zeta_{p^n}$.
Reducing this sequence modulo $p$ defines an element $\zeta\in S$ and hence one gets its Techim\"uller lifting $[\zeta]\in W(S)$.
Since $[\zeta]-1$ is contained in the first divided power ideal,
taking the logarithm one obtains $t=\log([\zeta])\in \Fil^1 B^+(R)$.
From the construction of $t$, the Galois group acts on it via the cyclotomic character.
Therefore, one has an additive, Galois equivariant map
\[\beta\colon \mathbb Z_p(1)\rightarrow B^+(R); \qquad 1\mapsto t.\]

For more details, see the construction in the last paragraph of \cite[page 28]{Fal89}.

We now give a crucial definition, which may be found on the bottom two lines of \cite[p. 35]{Fal89}.
This definition will be used to provide a functor from $\mathcal {MF}_{[a,b]}^{\nabla}(\text{Spf}(R)/W)$ to the category of continuous $p$-adic representations of $\text{Gal}(\bar R[\frac1p]/R[\frac1p])$.

\begin{definition}
\label{def:D}
Let $A$ be a smooth $W$-algebra or a localization/$p$-adic completion thereof. We set

\begin{equation}
\label{equation:D}
D(A)=B^+(A)[1/p]/B^+(A),
\end{equation}
which is equipped with the natural $\varphi$-structure and ``filtration'' (which is \textbf{not} by sub-objects).
To define the filtration, note the following:
$D(A)\cong\varinjlim_n B^+(A)/p^nB^+(A)$,
where the transition maps are multiplication by $p$.
The filtration on $B^+(A)$ induces one on $B^+(A)/p^nB^+(A)$,
which gives us the filtration on $D(A)$.
\end{definition}

Note that $D(A)$ is not an object of $\MFbig{A}$ as it has no natural $A$ structure.

\paragraph{\textbf{\emph{The local $\mathbb{D}$-functor.}}}
Let $Y$, $\mathcal Y$, $R$ and $\Phi$ be given as in \autoref{sect:FF/small}.
Suppose $\Phi$ happens to be \'etale in characteristic $0$
(which may be guaranteed if $Y$ is small).

Let $M=(V,\nabla,\Fil,\varphi)$ be an object in $\mathcal {MF}_{[a,b]}^{\nabla,\Phi}(\mathcal Y/W)$ with $b-a\leq p-2$.
We will now explain roughly how to ``evaluate'' $M$ on $B^{+}(\widehat{R})$.
This is analogous to the evaluation of a crystal,
but is more subtle here because $\overline{\widehat{R}}/W$ is far from being smooth.

There is a natural map
$\widehat{R}\rightarrow \widehat{\overline{\widehat{R}}}\cong B^+({\widehat{R}})/\Fil^1$.
By the formal smoothness of $\widehat{R}/W$,
we may lift this to a $W$-algebra homomorphism:
$$\widehat{R}\rightarrow B^+({\widehat{R}}).$$
Faltings shows that there is a particular such lift $\kappa_\Phi:\widehat{R}\rightarrow B^+(\widehat{R})$
which respects Frobenius lifts \cite[p.36, paragraph 3]{Fal89}.
Thus the following diagram commutes
\begin{equation}\label{equ:A3}
\xymatrix@C=2cm{
\widehat{R} \ar[r]^{\kappa_\Phi} \ar[d]^{\Phi}
& B^+(\widehat{R}) \ar[d]^{\Phi_B}\\
\widehat{R} \ar[r]^{\kappa_{\Phi}}
& B^+(\widehat{R}).\\
}
\end{equation}
Here $\Phi_B$ is the aforementioned Frobenius on $B^+(\widehat{R})$.
(This crucially uses an \'etale chart $W[T_1^{\pm 1},\dots,T_d^{\pm 1}]\rightarrow R$.)

Now, $F^1$ admits divided powers (by construction).
As the connection $\nabla$ satisfies Griffiths transversality,
it will follow that the following filtered $B^+({\widehat{R}})$-module:
$$\Pcris{M}:=V\otimes _{\widehat{R},\kappa_{\Phi}}B^+({\widehat{R}})$$
is \emph{independent} of the choice of the lift $\kappa_{\Phi}$.
Moreover, $\Pcris{M}$ has a Frobenius structure induced by the tensor of the Frobenius structure on each of the factors (recall,
$\widehat{R}$ had an \emph{\'etale} Frobenius structure $\Phi$).
Finally, in the case that $\Phi$ is \'etale, $D(R)$ is in fact in $\MFbig{R}$.

Now, $\kappa_{\Phi}$ cannot be made equivariant for the action of
$\pi_1^{\et}(\mathcal Y_K)=\mathrm{Gal}(\overline{\widehat{R}}[\frac1p]/\widehat{R}[\frac1p])$.\footnote{
Here, by \'etale fundamental group, we mean the fundamental group corresponding to the category of \emph{finite} \'etale covers.
}
Following Faltings, we can however find a horizontal action of $\pi_1^{\et}(\mathcal Y_K)$ on $\Pcris{M}$ via the connection $\nabla$ on $V$, which commutes with the $\varphi$'s.
Let's recall his construction \cite[p. 37]{Fal89} as follows.

Adjoining the $p$-power roots of the $T_i$ defines a homomorphism
$\gamma\colon\pi_1^{\et}(\mathcal Y_{K}) \rightarrow \mathbb Z_p(1)^d$.
For $\sigma\in \pi_1^{\et}(\mathcal Y_{K})$,
we denote the image of $\sigma$ in $\mathbb Z_p(1)^d$ by
$\gamma(\sigma)=(\gamma_1(\sigma),\cdots,\gamma_d(\sigma))$.
Recall there is a natural homomorphism
$\beta\colon \mathbb Z_p(1)\rightarrow F^1(B^+(\widehat{R}))$.
For any $v\in V$, $r\in B^+(\widehat{R})$ and $\sigma\in\pi_1^{\et}(\mathcal Y_{K})$,
\[\sigma(v\otimes r):= \left(\sum_I\nabla(\partial^I)(v)\otimes\beta(\gamma(\sigma))^I/I!\right)\cdot \sigma(r),\]
where, if we write $I$ as $(i_1,\cdots,i_d)\in \mathbb N^d$,
then $\beta(\gamma(\sigma))^I$ stands for
$\prod_{\ell=1}^d \beta(\gamma_\ell(\sigma))^{i_\ell}$
and
\begin{equation}
\label{equ_partical_con}
\nabla(\partial^I)=\underbrace{(\nabla(\partial_1)\circ \cdots \circ \nabla(\partial_1))}_{i_1}\circ \cdots \circ \underbrace{(\nabla(\partial_d)\circ \cdots \circ \nabla(\partial_d))}_{i_d},
\end{equation}
where $\partial_i$ are the derivations dual to $\mathrm{d} T_i/T_i$.

Faltings also showed that this Galois module does not depend on the choice of the Frobenius lifting $\Phi$.
That is, for any other Frobenius lifting $\Psi$,
there is a canonical isomorphism between $\pi_1^{\et}(\mathcal Y_K)$-modules
\begin{equation}
\label{equ_cano_iso_phi_psi}
V\otimes_{\kappa_\Phi} B^+(\widehat R) \rightarrow V \otimes_{\kappa_{\Psi}} B^+(\widehat R).
\end{equation}
(See \autoref{equ:A3} for the notation.)
In particular, when we write $\Pcris{M}$ we consider it as a $\pi_1^{\et}(\mathcal Y_K)$-module.

We now define the functor $\mathbb D$.
In the case that $V$ is killed by some power of $p$,
denote
\[\mathbb{D}(M)=\mathrm{Hom}(\Pcris{M},D(\widehat{R}))\]
where the homomorphisms are $B^+(\widehat R)$-linear and respect filtrations and the $\varphi$-structure.
In particular, when $\Phi$ is \'etale in characteristic $0$,
this is $\mathrm{Hom}$ in $\MFbig{R}$.

In the case that $V$ is $p$-torsion-free, denote
\[\mathbb{D}(M)=\varprojlim_n\mathrm{Hom}(\Pcris{M}/p^n,D(\widehat{R})).\]
(Note that $\Pcris{M}/p^n\cong \Pcris{M/p^n}$.) The fundamental group $\pi_1^{\et}(\mathcal Y_K)$ acts continuously on $\Pcris{M}$ and $D(\widehat{R})$, and so also on $\mathbb{D}(M)$.

\paragraph{\textbf{\emph{The global $\mathbb D$-functor.}}}
Let $Y$ be a proper smooth scheme over $W$ with geometrically connected generic fiber.
Denote by $\mathcal Y$ the formal completion of $Y$.
Let $I$ index the collection of pairs $(\mathcal U_i,\Phi_i)$,
where $\mathcal U_i$ is a small open affine of $\Y$ and $\Phi_i$ is a Frobenius lift \'etale in characteristic $0$.
(Again, if $\mathcal U_i$ is a small open affine,
then there exists a Frobenius lift \'etale in characteristic $0$
by raising the $T_i$ coordinates to the $p^{\text{th}}$ power,
see \cite[p. 35]{Fal89}.)
The local $\mathbb D$-functors over each $\mathcal U_i$ are compatible on overlaps,
due to the existence of the canonical isomorphism in \autoref{equ_cano_iso_phi_psi}.
Therefore, given a Fontaine-Faltings module over $\Y$,
we obtain a compatible system of \'etale sheaves on $\mathcal U_{i,K}$
(the generic fiber of $\mathcal U_i$).
Then it follows from \cite[Theorem 3.1]{BG98} that these local systems glue to a $\mathbb Z_p$-local system on $\mathcal Y_K$.
More precisely, we use faithfully flat descent for rigid spaces \cite[Theorem 4.2.9]{Con06}
(which takes \cite[Theorem 3.1]{BG98} as crucial input via a relative (analytic) Proj construction).\footnote{
Here, the covers are all \emph{finite}, hence one could also use a relative Spec construction in conjunction with \cite[Theorem 3.1]{BG98}.
}
An \'etale $\mathbb Z_p$-local system on $Y_K$ amounts to a system of finite \'etale covers of $Y_K$.
A very special case of the aforementioned faithfully flat descent shows that to specify such a system of finite \'etale covers on $Y_K$,
it is precisely equivalent to construct such covers on a \emph{finite} admissible cover of $Y_K$ which satisfy a cocycle/gluing condition.
(The point is that given a finite admissible cover $(\mathcal U_i)_K$ of $Y_K$,
the disjoint union $\bigsqcup (\mathcal{U}_i)_K\rightarrow Y_K$ will tautologically admit local fpqc quasi-sections,
as in \cite[Definition 4.2.1]{Con06}.)
We therefore obtain a functor
\begin{equation}
\mathbb D\colon \mathcal {MF}_{[a,b]}^{\nabla}(\Y/W)\rightarrow \text{Loc}_{\mathbb Z_p}(\mathcal Y_K).
\end{equation}

Since $Y$ is proper, $\mathcal Y_K=Y^{an}_K$.
By rigid GAGA, see \cite[Theorem 3.1]{Lut93},
the local system we obtain over $\mathcal Y_K$ is algebraic,
i.e., one obtains a $\mathbb Z_p$-local system on $Y_K$.\footnote{
Faltings' original idea is to use formal GAGA from EGA3(\cite[Th\'eor\`eme 5.1.4]{Gro61EGA3}). The \'etale sheaves on $\mathcal U_{i,K}$ can be expressed in terms of finite etale coverings.
In \cite[page 42]{Fal89}, he wanted to extend these local \'etale coverings into finite coverings over local formal schemes $\mathcal U_i$ and to glue them into a global finite covering of $\mathcal Y$.
It would then follow from formal GAGA that this finite \'etale cover of $\mathcal Y$ algebraizes to a finite \'etale cover of $Y$,
which is \'etale over the generic fiber $Y_K$. Unfortunately, Faltings did not write down the details;
the necessary formal gluing lemma was later carefully proven in \cite[Theorem A3]{Tsu96}.
We take a different approach, using rigid gluing of finite \'etale covers.
}
This has the following upshot: if $b-a\leq p-2$,
there is a functor, by abusing notation we still denote it by $\mathbb D$,
\begin{equation}
\mathbb D\colon \mathcal {MF}_{[a,b]}^{\nabla}(\Y/W)\rightarrow \text{Loc}_{\mathbb Z_p}(Y_K),
\end{equation}
from the category of Fontaine-Faltings modules (with $b-a\leq p-2$) to the category of finite dimensional lisse $\mathbb Z_p$ sheaves on $Y_K$.
Since $Y_K$ is connected, picking a base point,
this is equivalent to the category $\mathrm{Rep}_{\mathbb Z_p}(\pi_1^{\et}(Y_K))$ of continuous finite free $\mathbb Z_p$-representations of $\pi_1^{\et}(Y_K)$.
It is a fundamental result of Faltings that this functor is fully faithful;
a lisse $\mathbb Z_p$ sheaf in the essential image of $\mathbb{D}$ is called \emph{crystalline (with Hodge-Tate weights in $[a,b]$).}

\begin{remark}
The weight condition $0\leq b-a<p$ was used to prove that $\mathbb D(M)$ has the same type as $M$, see \cite[Theorem 2.4]{Fal89}.
Roughly speaking, in this case, if $pM=0$, then finding elements in $\mathbb D(M)$ can be reduced to finding solutions to a special equation, the number of which is $p^{\rank_{R/pR}{M}}$.
We demand that $b-a<p-1$ to compare the resulting category with the category of periodic Higgs-de Rham flows.
\end{remark}

\begin{proposition}
\label{prop_PullbackDfunctor}
We use the same notation as in \autoref{sec_PullbackFF},
Faltings' $\mathbb D$-functor preserves pullback of Fontaine-Faltings modules,
i.e., for any object $M$ in
$\mathcal {MF}_{[a,b]}^{\nabla}(\mathcal Y/W)$
with $b-a\leq p-2$, there is a functorial isomorphism
$f^*\mathbb D(M)\cong \mathbb D(f^*M)$.
\end{proposition}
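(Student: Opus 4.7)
The plan is to reduce to a local computation comparing the period-ring evaluation $\Pcris{\cdot}$ under pullback, then pass to Homs and glue globally.

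First I would fix, as in \autoref{sec_PullbackFF}, small affine opens $\mathcal U_i\subseteq\mathcal Y$ and $\mathcal U'_{i'}\subseteq\mathcal Y'$ with $f(\mathcal U'_{i'})\subseteq \mathcal U_i$, together with their Frobenius lifts $\Phi_i,\Phi'_{i'}$, and construct a canonical ring map $\tilde f\colon B^+(\widehat R)\to B^+(\widehat{R'})$ by mimicking the construction of $B^+$ itself: fix compatible geometric base points so that $f$ induces $\overline R\to \overline{R'}$, pass to tilts and Witt vectors to get a map on $\mathbf A_{\mathrm{inf}}$, and then take the divided-power hull and $p$-adic completion. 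By functoriality $\tilde f$ is compatible with $\Phi_B$ and the filtration, and is equivariant for the natural homomorphism $\pi_1^{\et}(\mathcal U'_{i',K})\to \pi_1^{\et}(\mathcal U_{i,K})$.

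The key local step is to produce, over $\mathcal U'_{i'}$, a canonical $B^+(\widehat{R'})$-linear isomorphism
\[\Pcris{f^*M}\cong \Pcris{M}\otimes_{B^+(\widehat R),\tilde f} B^+(\widehat{R'}).\]
Unfolding the definitions, both sides are of the shape $V(\mathcal U_i)\otimes_{\widehat R,\kappa}B^+(\widehat{R'})$ for two a priori different lifts $\kappa$ of the canonical reduction $\widehat R\to B^+(\widehat{R'})/\Fil^1$, namely $\kappa_{\Phi'_{i'}}\circ f_{i'}$ on the left and $\tilde f\circ\kappa_{\Phi_i}$ on the right. Their difference lands in the divided-power ideal $\Fil^1 B^+(\widehat{R'})$, so the Taylor-formula argument of \cite[Theorem~2.3]{Fal89}, combined with Griffiths transversality of $\nabla$, produces a canonical filtered isomorphism between the two base changes.

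The step I expect to take the most care is Frobenius compatibility. On the left, the Frobenius is $\varphi'_{i'}$, which by its construction in \autoref{sec_PullbackFF} equals $\varphi_i\otimes 1$ pre-composed with the Taylor isomorphism $\alpha_{\Phi'_{i'},\Phi_i}$; on the right, the Frobenius is $\varphi_i$ combined with the Frobenius-equivariance of $\tilde f$ and of $\kappa_{\Phi_i}$ encoded in diagram~(\ref{equ:A3}). Both descriptions ultimately collapse to the same cocycle identity for the Taylor construction -- the very identity already used at the end of \autoref{sec_PullbackFF} to verify compatibility of $\{\varphi'_{i'}\}_{i'\in I'}$. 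Galois compatibility is a parallel check: each action is built from $\nabla$ and $\beta$ by the same recipe, so the equivariance of $\tilde f$ together with functoriality of the $\gamma$-homomorphism under $f$ transports one action onto the other.

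Finally, post-composition with the induced map $\tilde f\colon D(\widehat R)\to D(\widehat{R'})$ produces the natural arrow $f^*\mathbb D(M)\to \mathbb D(f^*M)$, given by $h\mapsto h\otimes 1$ through the local isomorphism above, and it is $\pi_1^{\et}(\mathcal Y'_K)$-equivariant by construction. To verify bijectivity I would reduce to the $p^n$-torsion case: there $\Pcris{M/p^n}$ is locally free over $B^+(\widehat R)/p^n$ with a filtration by direct summands and a strongly $p$-divisible Frobenius, so by the structure theory of $\MF{\cdot}$ and Faltings' rank formula~\cite[Theorem~2.4]{Fal89} both sides are finite free of the same $\mathbb Z/p^n$-rank, reducing the question to injectivity of the natural map. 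Canonicity of the local isomorphisms ensures they glue over the cover $\{\mathcal U'_{i'}\}_{i'\in I'}$, yielding the global functorial isomorphism $f^*\mathbb D(M)\cong \mathbb D(f^*M)$.
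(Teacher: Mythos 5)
Your proposal follows essentially the same route as the paper: construct the induced map of period rings $B^+(\widehat R)\to B^+(\widehat R_{i'})$, use the divided-power structure on $\Fil^1$ (Taylor argument of \cite[Theorem~2.3]{Fal89}) to produce the canonical comparison of $\Pcris{\cdot}$ under pullback compatibly with filtration, Frobenius and Galois action, define the map $h\mapsto (h\otimes 1)$ composed with this comparison, conclude bijectivity from \cite[Theorem~2.4]{Fal89} in the $p^n$-torsion case, and glue over the cover of $\mathcal Y'$ using the cocycle property of the comparison isomorphisms. This matches the paper's proof, so no further comparison is needed.
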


\begin{proof}
We may assume that $M$ is killed by $p^n$ for some $n\in \mathbb N$, since we can get the isomorphism by taking inverse limit in the general case. We first reduce the statement to the case that $Y$ is small and affine.

Let $\{\mathcal U_i\}_{i\in I}$ be a connected small affine open covering of $\mathcal Y$ with $\Phi_i$ a lift of the absolute Frobenius on $\mathcal O_\Y(\mathcal U_i)\otimes_W k$ for any $i\in I$. Let
\[M_i:= M\mid_{\mathcal U_i} \in \mathcal {MF}_{[a,b]}^{\nabla,\Phi_i}(\mathcal U_i/W).\]
Then one gets a family of isomorphisms $\{\psi_i:f^*\mathbb D(M_i)\cong \mathbb D(f^*M_i)\}_{i\in I}$. By the functoriality, the diagram
\begin{equation*}
\xymatrix{
f^*\mathbb D(M_i)\mid_{f^{-1}(\mathcal U_{i,K}\cap \mathcal U_{j,K})}\ar[rr]^{\psi_i}\ar[d]_{\sim} & & \mathbb D(f^*M_i)\mid_{f^{-1}(\mathcal U_{i,K}\cap \mathcal U_{j,K})}\ar[d]^{\sim}\\
f^*\mathbb D(M_j)\mid_{f^{-1}(\mathcal U_{i,K}\cap \mathcal U_{j,K})}\ar[rr]_{\psi_j} & &\mathbb D(f^*M_j)\mid_{f^{-1}(\mathcal U_{i,K}\cap \mathcal U_{j,K})}
}
\end{equation*}
commutes. Then $\{\psi_i\}_{i\in I}$ can be glued into be a global isomorphism $f^*\mathbb D(M)\cong \mathbb D(f^*M)$.

Secondly, assume that $Y=\Spec R$ is small and affine with $\Phi$ a lift of Frobenius. Then we construct the isomorphism locally on $Y'$.
Let $\{\mathcal U'_{i'}=\Spf \hat R_{i'}\}_{i'\in I'}$ be a connected small affine open covering of $\mathcal Y'$ with $\Phi'_{i'}$ a lift of the absolute Frobenius on $\mathcal O_\Y(\mathcal U'_{i'})\otimes_W k$ for any $i'\in I'$. Let $f_{i'}$ be the restriction of $f$ on $\mathcal U'_{i'}$. $f_{i'}$ induces a map of period rings $f_{i',B}: B^+(\hat R)\to B^+(\hat R_{i'})$.
Generally, the diagram
\begin{equation*}
\xymatrix{
\hat R\ar[r]^{f_{i'}}\ar[d]_{\kappa_\Phi}&\hat R_{i'}\ar[d]^{\kappa_{\Phi'_{i'}}}\\
B^+(\hat R)\ar[r]_{f_{i',B}}&B^+(\hat R_{i'})
}
\end{equation*}
does not commute, but $f_{i',B}\circ \kappa_{\Phi}=\kappa_{\Phi'_{i'}}\circ f_{i'}$ by modulo $\Fil^1 B^+(\hat R_{i'})$. For any $M=(V,\nabla,\Fil,\varphi)$ in $\mathcal {MF}_{[a,b]}^{\nabla,\Phi}(\mathcal Y/W)$, since $\Fil^1 B^+(\hat R_{i'})$ admits a PD-structure, then there is a canonical $B^+(\hat R_{i'})$-linear isomorphism
\[\beta_{\Phi'_{i'},\Phi}:(V\otimes_{f_{i'}} \hat R_{i'}) \otimes_{\kappa_{\Phi'_{i'}}} B^+(\hat R_{i'})\cong (V\otimes_{\kappa_{\Phi}} B^+(\hat R))\otimes_{f_{i',B}}B^+(\hat R_{i'})\]
with respect to filtration, $\varphi$-structure and action of Galois group. For any $h\in \mathbb D(M)=\Hom(V\otimes B^+(\hat R),D(\hat R))$, the composition of
\[h\otimes 1: (V\otimes_{\kappa_{\Phi}} B^+(\hat R))\otimes_{f_B}B^+(\hat R_{i'})\longrightarrow D(\hat R_{i'})\]
and $\beta_{\Phi'_{i'},\Phi}$ defines an element in $\mathbb D(f_{i'}^*M)$. Hence we have constructed an injective map of local systems on $\mathcal U'_{i'}$
\[\psi_{i'}:f_{i'}^* \mathbb D(M)\to \mathbb D(f_{i'}^*M)\] by sending $h$ to $(h\otimes 1)\circ \beta_{\Phi'_{i'},\Phi}$. \cite[Theorem 2.4]{Fal89} implies that as abelian groups, both sides of $\psi_{i'}$ are finite groups of the same type as $f_i^*M$, thus $\psi_{i'}$ is an isomorphism.

We remain to verify that $\{\psi_{i'}\}_{i'\in I'}$ glues to be a global isomorphism. Assume that $\mathcal U'_{i'} \cap \mathcal U'_{j'}=\Spf \hat R_{i'j'}$. Then for any $i',j'\in I'$, there is a canonical $B^+(\hat R_{i'j'})$-linear isomorphism
$$\beta_{\Phi'_{i'},\Phi'_{j'}}:(V\otimes_f \hat R_{i'j'})\otimes_{\kappa_{\Phi'_{i'}}}B^+(\hat R_{i'j'})\longrightarrow (V\otimes_f \hat R_{i'j'})\otimes_{\kappa_{\Phi'_{j'}}}B^+(\hat R_{i'j'}) $$ which is compatible with filtration, $\varphi$-structure, action of Galois group and
\[\beta_{\Phi'_{j'},\Phi}\circ \beta_{\Phi'_{i'},\Phi'_{j'}}=\beta_{\Phi'_{i'},\Phi}.\] It induces a commutative diagram
\begin{equation*}
\xymatrix{
f_{i'}^*\mathbb D(M)\mid_{\mathcal U'_{i',K}\cap \mathcal U'_{j',K}}\ar[r]^{\psi_{i'}}\ar[d]_{=} & \mathbb D(f_{i'}^*M)\mid_{\mathcal U'_{i',K}\cap \mathcal U'_{j',K}}\ar[d]^{\beta_{\Phi'_{i'},\Phi'_{j'}}}\\
f_{j'}^*\mathbb D(M)\mid_{\mathcal U'_{i',K}\cap \mathcal U'_{j',K}}\ar[r]_{\psi_{j'}} & \mathbb D(f_{j'}^*M)\mid_{\mathcal U'_{i',K}\cap \mathcal U'_{j',K}}
}.
\end{equation*}
Thus $\{\psi_{i'}\}_{i'\in I'}$ glues to be a global isomorphism $\psi$ and the functoriality follows by the construction of $\psi$.
\end{proof}

\section{Logarithmic crystalline representations}
\label{section:log_FFM}

Faltings claimed in \cite[i) p.43]{Fal89} that the theory of crystalline representations extends to the logarithmic context.
However, it seems as though the details of this construction have never appeared in the literature.
In this section, we explicitly write out this construction.
One may also find the definition of a logarithmic Fontaine-Faltings module in \cite{LSZ13a}.

\subsection{Logarithmic Fontaine-Faltings modules}

\begin{setup}
\label{setup:Aff_log_FF}
Let $Y=\mathrm{Spec}R$ be an affine $W$-scheme with an \'etale map
$$W[T_1,T_2,\cdots, T_{d}]\rightarrow R,$$
over $W$, let $Z$ be the divisor in $Y$ defined by $T_1\cdots T_s=0$ with $s \leq d$,
and let $U$ be the complement of $Z$ in $Y$.
Therefore, $U$ is a small affine scheme.
In this context, we say that $(Y,Z)$ is \emph{ logarithmically small}.
We construct spaces $Z_K$, $\mathcal Z$, $\mathcal Z_K$, $U_K$, $\mathcal U$ and $\mathcal U_K$ exactly analogously to those for $Y$ in \autoref{setup:scheme_rigid1}.
Denote $\mathcal Y^\circ_K:=\mathcal Y_K-\mathcal Z_K$.
Denote by $\widehat{R}$ the $p$-adic completion of $R$,
so $\mathcal Y=\text{Spf}(\widehat{R})$.
Denote by $\Phi:\widehat{R}\rightarrow\widehat{R}$ a lifting of the absolute Frobenius on $R/pR$ such that $\Phi(T_i) = w_iT_i^p$ for some $w_i\in \widehat{R}^\times$ and $1\leq i\leq d$.\footnote{
This means that the lifting $\Phi$ is compatible with the logarithmic structure. For example, one can take $w_i=1$ for all $i=1,2,\cdots,d$. We note that since $\Phi$ lifts the absolute Frobenius, $w_i\equiv 1\pmod{p}$. One can therefore write $w_i=1+pu_i$ for some $u_i\in \widehat{R}$.}
The existence of $\Phi$ is ensured by \cite[Variant 3.3.2]{AcZd21}.
\end{setup}

One defines a logarithmic Fontaine-Faltings module over $(\mathcal Y,\mathcal Z)$ in the same way as in the non-logarithmic case, simply using differentials with logarithmic poles instead of regular differentials.
More explicitly, a \emph{logarithmic Fontaine-Faltings module} over the $p$-adic formal completion $(\mathcal Y,\mathcal Z)$ of $(Y,Z)$ with Hodge-Tate weights in $[a,b]$ is a quadruple $(V,\nabla,\Fil,\varphi)$, where
\begin{itemize}
\item[-]
$(V,\nabla)$ is a finitely generated de Rham $\widehat{R}$-module\footnote{
We note that here we do not require the underlying module to be locally free a priori.
But by the existence of Frobenius structure, if the underlying module is $p$-torsion-free, then it must be locally free.
This follows from the fact that $(V/p^n,\Fil,\varphi)$ is located in $\MF{R}$ for each $n\geq1$.
}
with logarithmic poles along $T_1\cdots T_s=0$;

\item[-]
$\Fil$ is a Hodge filtration on $(V,\nabla)$ of level in $[a,b]$ as in the text after Setup \ref{setup:scheme_rigid1};

\item[-] $\widetilde{V}$ is the quotient
$\bigoplus\limits_{i=a}^b\Fil^iV/\sim$
with $px\sim y$ for $x\in\Fil^iV$ with $y$ being the image of $x$ under the natural inclusion $\Fil^iV\hookrightarrow\Fil^{i-1}V$;
In other words, if we denote by $[v]_i$ the image of $v\in \Fil^iV$ in $\widetilde{V}$ under the natural morphism $\Fil^iV\rightarrow \widetilde{V}$,
then $[v]_{i-1} = p\cdot [v]_i$.

\item[-]
$\varphi$ is an $\widehat{R}$-linear isomorphism
\[\varphi:\widetilde{V}\otimes_{\Phi}\widehat{R} \longrightarrow V,\]

\item[-]
Consider the natural logarithmic connection $\Phi_*(\widetilde{\nabla})$ on $\widetilde{V}\otimes_{\Phi}\widehat{R}$
with logarithmic poles along $T_1\cdots T_s=0$ induced by $\widetilde{\nabla}$.\footnote{
The construction is almost the same as Faltings', except that we need consider logarithmic structures here.
More precisely, for any $v\in\Fil^iV$, write $\nabla(v)$ as $\sum_{j=1}^s v_j\otimes\mathrm{d}\log T_j+\sum_{j=s+1}^d v_j\otimes\mathrm{d} T_j$,
then a logarithmic connection $\Phi_*(\widetilde{\nabla})$ on $\widetilde{V}\otimes_\Phi \widehat{R}$ with logarithmic poles along $T_1\cdots T_s=0$ is defined by
\begin{equation*}
\Phi_*(\widetilde{\nabla})([v]_i \otimes_{\Phi}1)=
\begin{cases}
([v_j]_{i-1}\otimes_{\Phi}1)\otimes \mathrm{d}\log T_j & 1\leq j\leq s\\
([v_j]_{i-1}\otimes_{\Phi}1)\otimes \mathrm{d} T_j& j\geq s+1
\end{cases}
\end{equation*}
The condition for $\Phi(T_i)=w_iT_i^p$ is used here to ensure that $\Phi_*(\widetilde{\nabla})$ is still a connection with logarithmic poles along $T_1\cdots T_s=0$.
}
The map $\varphi$ is horizontal with respect to the connections,
i.e., $\varphi$ is a morphism between two de Rham $\widehat{R}$-modules with logarithmic poles along $T_1\cdots T_s=0$.

\end{itemize}

In particular, a logarithmic Fontaine-Faltings module whose underlying de Rham $\hat{R}$ module $V$ is locally free may be considered as a filtered logarithmic $F$-crystal in finite, locally free modules.
Note that our definition of a logarithmic Fontaine-Faltings module also includes the case when $V$ is $p$-primary torsion.
Denote by $\mathcal {MF}_{[a,b]}^{\nabla,\Phi}((\mathcal Y,\mathcal Z)/W)$ the category of logarithmic Fontaine-Faltings modules over $(\mathcal Y,\mathcal Z)$ with Hodge-Tate weights in $[a,b]$.
For the rest of what follows, we assume that $b-a\leq p-2$.
It will follow that the resulting category is independent of the choice of $\Phi$,
analogously to the non-logarithmic case given in \cite[Theorem 2.3]{Fal89}. We first need to prove the following lemma.

\begin{lemma}\label{lem_coeffcient}
    Let $f_n(X)=n!\tbinom{X}{n}=X(X-1)(X-2)\cdots(X-n+1)\in \mathbb Q[X]$ for all positive number $n$ and let $f_0=1$. $\{f_n\}_{n\in \mathbb N}$ forms a basis of $\mathbb Q[X]$ over $\mathbb Q$. Set $a_{mn}^k\in \mathbb Q$ such that $f_n(X)f_m(X)=\sum_{k=0}^{m+n}a_{mn}^k f_k(X)$. Then
    \[\sum_{m,n\geq 0}a_{mn}^k\frac{(X-1)^m}{m!}\frac{(Y-1)^n}{n!}=\frac{(XY-1)^k}{k!}\]
    in $\mathbb Q[X,Y]$ for all $k\in \mathbb N$.
\end{lemma}
\begin{proof}
    Consider in the ring $\mathbb Q[[X-1,Y-1,t]]$, one has
    \begin{equation*}
    \begin{split}
       \sum_{k=0}^{\infty} f_k(t)\sum_{m,n\geq 0}a_{mn}^k\frac{(X-1)^m}{m!}\frac{(Y-1)^n}{n!}
       &=\sum_{m,n\geq 0}f_m(t)f_n(t)\frac{(X-1)^m}{m!}\frac{(Y-1)^n}{n!}\\
       &=\left(\sum_{m=0}^{\infty}f_m(t)\frac{(X-1)^m}{m!}\right)\left(\sum_{n=0}^{\infty}f_n(t)\frac{(Y-1)^n}{n!} \right) \\
       &=X^tY^t=(XY)^t\\
       &=\sum_{k=0}^{\infty}f_k(t)\frac{(XY-1)^k}{k!}.
    \end{split}
    \end{equation*}
By comparing the coefficients of $f_k(t)$ in the equation,  we have
\[\sum_{m,n\geq 0}a_{mn}^k\frac{(X-1)^m}{m!}\frac{(Y-1)^n}{n!}=\frac{(XY-1)^k}{k!}.\]
\end{proof}

\begin{theorem}[Logarithmic version of Faltings' gluing theorem]
\label{thm_log_version_gluing}
Notation as in Setup \ref{setup:Aff_log_FF}.
Assume $0\leq b-a\leq p-1$, and $p>2$.
Then for any two choices of $\Phi, \Psi$ of Frobenius lifts,
satisfying the conditions specified in Setup \ref{setup:Aff_log_FF},
there is an equivalence between the corresponding categories
$$\mathcal {MF}_{[a,b]}^{\nabla,\Psi}((\mathcal Y,\mathcal Z)/W)\rightarrow \mathcal {MF}_{[a,b]}^{\nabla,\Phi}((\mathcal Y,\mathcal Z)/W).$$
These equivalences satisfy the obvious cocycle condition, given a third Frobenius lift, so that up to canonical equivalence the category
$\mathcal {MF}_{[a,b]}^{\nabla,\Phi}((\mathcal Y,\mathcal Z)/W)$
is \emph{independent} of the choice of $\Phi$.
\end{theorem}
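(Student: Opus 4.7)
The plan is to mimic Faltings' Taylor-series construction of $\alpha_{\Phi,\Psi}$ in \cite[Theorem 2.3]{Fal89}, but with logarithmic derivations replacing the ordinary ones. Let $\partial_1,\dots,\partial_d$ be the derivations dual to $\mathrm{d}\log T_1,\dots,\mathrm{d}\log T_s, \mathrm{d}T_{s+1},\dots,\mathrm{d}T_d$, as in \eqref{equ_partical_con}. Given two log-compatible Frobenius lifts $\Phi(T_j) = w_jT_j^p$ and $\Psi(T_j) = w'_jT_j^p$, I would set
\[
\xi_j := \begin{cases} w_j/w'_j - 1 & 1\leq j\leq s,\\ \Phi(T_j)-\Psi(T_j) & s+1\leq j\leq d.\end{cases}
\]
Since $w_j \equiv w'_j \equiv 1 \pmod p$ (as both lift the Frobenius) the element $w_j/w'_j$ lies in $1 + p\widehat R$, so every $\xi_j$ is a \emph{regular} element of $p\widehat R$. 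This is precisely where the compatibility assumption $\Phi(T_j)=w_jT_j^p$ from Setup \ref{setup:Aff_log_FF} is used, and it is what allows a logarithmic Taylor expansion to make sense.

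Next, given $(V,\nabla,\Fil,\varphi_\Psi)\in \mathcal{MF}^{\nabla,\Psi}_{[a,b]}((\mathcal Y,\mathcal Z)/W)$, I would define
\[
\alpha_{\Phi,\Psi}\colon \widetilde V \otimes_\Phi \widehat R \longrightarrow \widetilde V \otimes_\Psi \widehat R,\qquad [v]_i\otimes_\Phi 1 \longmapsto \sum_{I\in\mathbb N^d} [\nabla(\partial^I)(v)]_{i-|I|} \otimes_\Psi \frac{\xi^I}{I!},
\]
where $\xi^I = \prod_j \xi_j^{i_j}$. Griffiths transversality ensures $\nabla(\partial^I)$ drops filtration level by $|I|$, so $[\nabla(\partial^I)(v)]_{i-|I|}$ is interpreted via the relation $[v]_{i-1}=p[v]_i$ once $i-|I|<a$; combined with $\xi^I\in p^{|I|}\widehat R$ and the hypothesis $b-a\leq p-2$, the standard Fontaine–Laffaille divided-power bookkeeping shows the sum converges $p$-adically to a well-defined element. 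The assignment is $\widehat R$-linear in the second factor, and one then extends to a global functor by pulling back the quadruple to $(V,\nabla,\Fil,\varphi_\Psi\circ \alpha_{\Phi,\Psi})\in\mathcal{MF}^{\nabla,\Phi}_{[a,b]}$.

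The verifications to be carried out, all parallel to the non-log case, are: (i) $\alpha_{\Phi,\Psi}$ is horizontal with respect to the logarithmic connections $\Phi_*(\widetilde\nabla)$ and $\Psi_*(\widetilde\nabla)$; (ii) it is strict for the filtrations; (iii) it is an isomorphism, with inverse given by the analogous formula after swapping the roles of $\Phi$ and $\Psi$; (iv) for a third log-compatible lift $\Xi$, the cocycle identity $\alpha_{\Phi,\Xi}=\alpha_{\Phi,\Psi}\circ\alpha_{\Psi,\Xi}$ holds, which reduces to the composability of log Taylor expansions and a standard reindexing of multi-indices. These four checks formally yield the claimed equivalence and the independence of $\mathcal{MF}^{\nabla,\Phi}_{[a,b]}((\mathcal Y,\mathcal Z)/W)$ from $\Phi$ up to canonical equivalence.

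The main obstacle, and the place where the log argument genuinely diverges from Faltings', is point (i): one must confirm that $\Phi_*(\widetilde\nabla)$ really has \emph{logarithmic} (not merely meromorphic) poles along $Z$ after transport by $\alpha_{\Phi,\Psi}$, and that the Taylor formula itself lands in the subsheaf of sections with only log poles. The identity $\Phi^*(\mathrm{d}\log T_j) = p\,\mathrm{d}\log T_j + \mathrm{d}\log w_j$ together with $w_j\in \widehat R^\times$ keeps the relevant pullbacks logarithmic, and the use of $\xi_j = w_j/w'_j - 1$ (rather than $\Phi(T_j)-\Psi(T_j)$) for $j\leq s$ is precisely what is needed so that $\xi_j$ remains regular and the log Taylor series does not acquire spurious poles. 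Once this compatibility is cleanly established, the remaining checks proceed by the same divided-power manipulations as in \cite[Theorem 2.3]{Fal89}.
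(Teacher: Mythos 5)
Your overall architecture is the same as the paper's (a Faltings-style Taylor-series gluing map $\alpha_{\Phi,\Psi}$ built from logarithmic derivations, followed by the checks of well-definedness, horizontality, cocycle, invertibility), but the concrete expansion element you chose is wrong, and this breaks the two checks that carry all the content. For a logarithmic coordinate ($1\leq j\leq s$) the correct element is $\log\big(\Phi(T_j)/\Psi(T_j)\big)=\log(w_j/w_j')\in p\widehat R$, not $\xi_j=w_j/w_j'-1$. The reason is that well-definedness of $\alpha_{\Phi,\Psi}$ on the twisted tensor product $\widetilde V\otimes_{\Phi}\widehat R$ (where $rm\otimes 1=m\otimes\Phi(r)$) is exactly equivalent to the logarithmic Taylor identity
\begin{equation*}
\Phi(r)\;=\;\sum_{I}\Psi\big(\partial^I(r)\big)\,\frac{\xi^I}{I!},\qquad \partial_j=T_j\frac{\partial}{\partial T_j}\ (j\leq s),
\end{equation*}
and this identity forces $\xi_j=\log(\Phi(T_j)/\Psi(T_j))$: testing it on $r=T_j$ gives $\sum_n\Psi(T_j)\xi_j^n/n!=\Psi(T_j)e^{\xi_j}$, which equals $\Phi(T_j)$ only when $\xi_j$ is the logarithm, not $w_j/w_j'-1$ (the discrepancy is nonzero modulo $p^3$ already). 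For the same reason your step (iv) fails with your $\xi_j$: the cocycle identity rests on the additivity $\log(\Phi_1(T)/\Phi_3(T))=\log(\Phi_1(T)/\Phi_2(T))+\log(\Phi_2(T)/\Phi_3(T))$ combined with the binomial formula, and $u\mapsto u-1$ is not additive under multiplication of the units $w_j/w_j'$. So the "standard divided-power bookkeeping" you defer to cannot be carried out for your formula; the map is not even well defined on the tensor product.

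Two further points. First, your normalization of the terms is also off: with the divided module $\widetilde V$ (where $[v]_{i-1}=p[v]_i$), the natural Taylor transport sends $[m]_i\otimes_\Phi 1$ to terms of the shape $[\nabla(\partial)^I(m)]\otimes\xi^I/(I!\,p^{\min\{i,|I|\}})$ with the filtration index shifted accordingly, as in the paper's formula \eqref{LogConn}; writing $[\nabla(\partial^I)(v)]_{i-|I|}\otimes\xi^I/I!$ and absorbing negative indices by $[v]_{i-1}=p[v]_i$ introduces an extra factor $p^{|I|}$ in each term, so even after correcting $\xi_j$ your map would fail horizontality and the Taylor/well-definedness check. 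Second, the paper also verifies that $\alpha_{\Phi,\Psi}$ is independent of the chosen coordinates, via a coordinate-free description using Kato's logarithmic divided-power envelope of the diagonal; your outline omits this, which matters once one glues over a cover with different coordinate charts (\autoref{lem_Gluecocycle}). With $\xi_j$ replaced by $\log(w_j/w_j')$ and the paper's normalization, your plan does go through exactly along the lines you indicate.
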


\begin{proof}
For any two liftings $\Phi$ and $\Psi$, we will define a canonical isomorphism
\begin{equation}
\alpha_{\Phi,\Psi}: \widetilde{V}\otimes_\Phi\widehat{R} \simeq \widetilde{V}\otimes_\Psi \widehat{R},
\end{equation}
by the following rule (analogous to the non-logarithmic case given in \cite[page 34]{Fal89}).
Without loss of generality, we assume $s=d$, $a=0$ and $b=p-1$; otherwise shift the numbering.
Let $T_1,\cdots,T_d$ be local coordinates such that our objects have logarithmic poles along $T_1\cdots T_d=0$.
Let $\partial_i = T_i\frac{\partial}{\partial T_i}$ denote the dual basis of logarithmic $R$-derivations.
The Frobenius lifts $\Phi$ and $\Psi$ have been required,
by the conditions specified in Setup \ref{setup:Aff_log_FF},
to respect the divisor at infinity;
therefore there exist $u_i,v_i\in\widehat{R}$ such that $\Phi(T_i) = T_i^p(1+pu_i)$ and $\Psi(T_i) = T_i^p(1+pv_i)$.
In particular
\[\Phi(T_i)/\Psi(T_i) = (1+pu_i)(1+pv_i)^{-1} \in 1+p\widehat{R}.\]
Via $\nabla$, the $\partial_i$ operate on $V$;
hence, for any multi-index $I=(i_1,\cdots,i_d)$ we get an endomorphism of $V$
$$\nabla(\partial)^{\{I\}} := \prod_{j=1}^d\prod_{k=0}^{i_j-1}(\nabla(\partial_j)-k).$$
For any two multi-indices $I$ and $J$, one can check that
\[\nabla(\partial)^{\{I\}}\circ \nabla(\partial)^{\{J\}}=\sum_K a_{IJ}^K\nabla(\partial)^{\{K\}}\] with
$a_{IJ}^K:=\prod_{1\leq p,q,r\leq d}a_{i_p j_q}^{k_r}$, where $a_{i_p j_q}^{k_r}$ is defined the same as in \autoref{lem_coeffcient}.
Similarly, given a multi-index as above, we denote
$$\big(\Phi(T)/\Psi(T)-1\big)^I:=\prod\limits_{j=1}^d \big(\Phi(T_j)/\Psi(T_j)-1\big)^{i_j}.$$
Note that this is divisible by $p^{|I|}$, where $|I|=i_1+\cdots+i_d$ is the order of $I$.
Finally, set $I!=i_1!\cdots i_d!$.

Choose $m\in \Fil^i(V)$; via the natural map $\Fil^i(V) \rightarrow \widetilde{V}$, $m$ defines an element $[m]_i\in \widetilde{V}$.
Then the map $\alpha_{\Phi,\Psi}$ is specified by
\begin{equation}
\label{LogConn}
\alpha_{\Phi,\Psi}([m]_i\otimes_\Phi 1) = \sum_{I} \left[\nabla(\partial)^{\{I\}}(m)\right]_{\min\{0,i-|I|\}} \otimes_{\Psi} \frac {\big(\Phi(T)/\Psi(T)-1\big)^I}{I!\cdot p^{\min\{i,|I|\}}}.
\end{equation}
We note that, by Griffiths' transversality, $\nabla(\partial)^I(m)$ is contained in $\Fil^{\min\{0,i-|I|\}}(V)$.

In the same manner as in \cite[page 35]{Fal89}, we will show that $\alpha_{\Phi,\Psi}$ is parallel with respect to the connection,
satisfies the cocycle condition and induces an equivalence of categories
\begin{equation}
\xymatrix@R=0mm{
\mathcal {MF}_{[a,b]}^{\nabla,\Psi}(\mathcal Y/W)\ar[r] & \mathcal {MF}_{[a,b]}^{\nabla,\Phi}(\mathcal Y/W).\\
(V,\nabla,\Fil,\varphi)\ar@{|->}[r] & (V,\nabla,\Fil,\varphi\circ\alpha_{\Phi,\Psi})\\
}
\end{equation}

Here we verify the necessary formal properties:
\begin{itemize}
\item
$\alpha_{\Phi,\Psi}$ as above gives indeed a well defined map from $\widetilde{V}\otimes_\Phi\widehat{R}$ to $\widetilde{V}\otimes_\Psi\widehat{R}$.
In other words, we need to show that
\[\alpha_{\Phi,\Psi}(rm\otimes_\Phi 1) = \alpha_{\Phi,\Psi}(m\otimes_\Phi1)\cdot \Psi(r).\]
This can be checked by using the following logarithmic version of the Taylor formula
\[\Phi(r) = \sum_I\Psi(\partial^{\{I\}}(r)) \cdot \frac{\big(\Phi(T)/\Psi(T)-1\big)^I}{I!}.\]
We note that the Taylor formula can be checked first for monomials $r=T_1^{j_1}\cdots T_d^{j_d}$. It is also easy to see that the above formula extends to finite linear combinations,
and hence holds for $r$ which are polynomial expressions in the $T_i$. By continuity, it holds for all $r\in R$.

\item
For different Frobenius-lifts $\Phi_1$,$\Phi_2$,$\Phi_3$ the $\alpha$'s satisfy transitivity.
In other words,
\[\alpha_{\Phi_1,\Phi_3} = \alpha_{\Phi_2,\Phi_3} \circ \alpha_{\Phi_1,\Phi_2}.\]
By \autoref{lem_coeffcient}, one has
\[\frac{\big(\Phi_1(T)/\Phi_3(T)-1\big)^K}{K!} =  \sum_{I,J} a_{IJ}^K \frac{\big(\Phi_2(T)/\Phi_3(T)-1\big)^J}{J!}\cdot \frac{\big(\Phi_1(T)/\Phi_2(T)-1\big)^{I}}{I!}.\]
The transitivity of $\Phi_1$,$\Phi_2$,$\Phi_3$ follows from this formula.

\item
$\alpha_{\Phi,\Psi}$ is independent of the choice of local coordinates $T_1,\cdots,T_d$:
There is a coordinate free expression for $\alpha$: Let $J\subset R\otimes_W R$ denote the kernel of multiplication $R\otimes_WR\rightarrow R$,
$R_1:= D_{J}(R\otimes R)^{\wedge}$ be the completed logarithmic version divided power hull (see \cite[Definition 5.4]{Kato88}) with respect to the logarithmic poles.
Since $\Phi\cong \Psi\pmod{p}$ and both of them respect the logarithmic poles, the pair $(\Phi,\Psi)$ defines a homomorphism from $R_1$ into $\widehat{R}$.
\begin{equation*}
\xymatrix{
W \ar[r]\ar[d] & \widehat{R} \ar[d] \ar@/^24pt/[dddrr]^{\Psi} &&\\
\widehat{R} \ar[r] \ar@/_24pt/[ddrrr]^\Phi & \widehat{R}\otimes_W\widehat{R} \ar[dr] &&\\
&& R_1\ar[dr]|{(\Phi,\Psi)}&\\
&&& \widehat{R}\\
}
\end{equation*}
The two modules $V\otimes_W \widehat{R}$ and $\widehat{R}\otimes_W V$ become isomorphic over $R_1$
\begin{equation}
\label{stratification}
(V\otimes_W \widehat{R})\otimes_{\widehat{R}\otimes_W\widehat{R}} R_1 \cong (\widehat{R}\otimes_W V)\otimes_{\widehat{R}\otimes_W\widehat{R}} R_1 =: V_1
\end{equation}
by definition of a logarithmic integrable connection.
Call this module $V_1$, and moreover this isomorphism is given by the Taylor formula.
The isomorphism respects the filtration on $V_1$ which is the product of the filtrations given by $\Fil^i(V)$ on $V$ and the divided powers $J^{[n]}$ on $R_1$.

For each $n$, let $\Fil^n(V_1)$ denote either of the following two (canonically isomorphic) submodules,
with isomorphism furnished by \eqref{stratification}
\[\sum_{i+j=n}(\Fil^i V\otimes_W \widehat{R})\otimes_{\widehat{R}\otimes_W\widehat{R}} J^{[j]} \cong \sum_{i+j=n}(\widehat{R}\otimes_W \Fil^i V)\otimes_{\widehat{R}\otimes_W\widehat{R}} J^{[j]}\]
There exists a map
\[\Fil^n(V_1) \longrightarrow \widetilde{V}\otimes_\Psi \widehat{R}\]
sending $(x_i\otimes_W1) \otimes_{\widehat{R}\otimes\widehat{R}} y_j$ ($x_i\in\Fil^i(V),y_j\in J^{[j]}$) to $\sum x_i\otimes_{\Psi} {(\Phi,\Psi)}(y_j)/p^j$.
That this is well defined follows from a computation in local coordinates.
Now we obtain $\alpha_{\Phi,\Psi}$ by first applying the isomorphism in \eqref{stratification}, and the map above.

\item
That $\alpha_{\Phi,\Psi}$ is parallel for the connections is a straightforward computation.
\end{itemize}
It follows that $\widetilde{V}\otimes_\Phi\widehat{R}$ together with its connection, is up to canonical isomorphism independent of the specific choice of $\Phi$.
\end{proof}

\begin{remark}
When $s=0$, by the formula
\[\nabla\left(\frac{\partial}{\partial T_i}\right)^n=T_i^{-n}\prod_{k=0}^{n-1}\left(\nabla\left(T_i\frac{\partial}{\partial T_i}\right)-k\right),\]
we can verify that \autoref{LogConn} coincides with the formula in \cite[Theorem 2.3]{Fal89}.
\end{remark}

\subsection{The logarithmic Fontaine-Laffaille-Faltings' $\mathbb D^{\log}$-functor}
\label{section logFDF}

We will define the functor $\mathbb D^{\log}$.
This will proceed in two steps. We will first assume that $Y$ is small and $(Y,Z)$ is logarithmically small. Then we will describe the gluing procedure.

\subsubsection{\textbf{\emph{The local $\mathbb{D}^{\log}$-functor.}} }

We keep the notation in \autoref{setup:Aff_log_FF}. Let $M$ be an object of $\mathcal {MF}_{[a,b]}^{\nabla,\Phi}((\mathcal Y,\mathcal Z)/W)$. First, set:
\[ \mathcal Y_n=\mathrm{Spf}\widehat{R}[T_1^{\frac1{p^n}},\cdots,T_s^{\frac1{p^n}}]^\wedge,\quad \mathcal{Y}_n^{\circ}=\mathcal{Y}_n\times_{\mathcal Y}\mathcal Y^\circ,\quad \mathcal Z_n=\mathcal Y_n-\mathcal Y_n^\circ.\]
There is a natural tower of morphisms
\[\ldots\to \mathcal Y_{n+1}\to \mathcal Y_{n}\to \ldots \to \mathcal Y_{1}\to \mathcal Y\]
such that for each $n\in \mathbb{N}$, $\mathcal Y_{n+1}\to \mathcal Y_{n}$ is finite flat of degree $ p^{s}$.
Denote by $\pi_n: \mathcal Y_n\to \mathcal Y$ the composition of morphisms and $M_n:=\pi_n^*(M/p^n M)$. Then $\pi_n$ induces a finite flat morphism on rigid generic fiber $\pi_{n,K}:\mathcal Y_n\to \mathcal Y$, whose restriction to $\mathcal Y_n^\circ$ is Galois.  Because
\[\mathrm{d}\log T_i = p^n \cdot\mathrm{d}\log T_i^{\frac1{p^n}}\]
and $M/p^n M$ is killed by $p^n$, $M_n$ no longer has logarithmic poles along $\mathcal Z_n$. Hence $M_n$ is an object in $\mathcal {MF}_{[a,b]}^{\nabla,\Phi}(\mathcal Y_n/W)$.
By taking the Faltings' $\mathbb D$-functor in \autoref{section FDF} for this Fontaine-Faltings module over $\mathcal Y_n$, one gets a local system,
denoted by $\mathbb L_{\mathcal Y_{n,K}}$, over $\mathcal Y_{n,K}$.
Since the functor $\mathbb D$ is compatible with localization and pullback by \autoref{prop_PullbackDfunctor},
one has
\[\mathbb{L}_{\mathcal Y_{n,K}}\mid_{\mathcal U_{n,K}} = \pi_n^*\mathbb D(M/p^nM\mid_{\mathcal U})\]
on $\mathcal U_{n,K}$.
This implies that as a $\mathbb{Z}/p^n \mathbb{Z}$-module, there is a $\pi_1^{\et}(\mathcal U_{n,K})$-equivariant isomorphism $$\mathbb{D}(M_n)\cong \mathbb{D}(M/p^n M \mid_{\mathcal U}),$$ and $\mathbb{D}(M_n)$ is equipped with a continuous action of $\pi_1^{\et}(\mathcal U_K)$ through this isomorphism. Considering the diagram
\begin{equation}
\label{Diag:fundGp}
\xymatrix{
\pi_1^{\et}(\mathcal U_{n,K}) \ar[r] \ar@{^{(}->}[d] & \pi_1^{\et}(\mathcal Y_{n,K}^\circ)\ar@{^{(}->}[d] \\
\pi_1^{\et}(\mathcal U_K)\ar[r]^{h} & \pi_1^{\et}(\mathcal Y_{K}^\circ),\\
}
\end{equation}
whose vertical arrows are open immersions of topological groups with index $p^{sn}$.
Then as a $\mathbb{Z}/p^n\mathbb{Z}$-module, $\mathbb{D}(M_n)$ is endowed with three compatible actions of $\pi_1^{\et}(\mathcal U_{n,K})$, $\pi_1^{\et}(\mathcal Y_{n,K}^\circ)$, and $\pi_1^{\et}(\mathcal U_K)$.

\begin{lemma}
\label{lem_GroDeo}
Let $\pi_1^{\et}(\mathcal U_{K})=\bigcup_{i=1}^{p^{sn}} g_i\pi_1^{\et}(\mathcal U_{n,K})$ be a coset decomposition, then $\pi_1^{\et}(\mathcal Y^\circ_{K})=\bigcup_{i=1}^{p^{sn}} h(g_i)\pi_1^{\et}(\mathcal Y_{n,K}^\circ)$ is also a coset decomposition.
\end{lemma}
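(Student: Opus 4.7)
The plan is to reduce the lemma to the identity
\[
h^{-1}\bigl(\pi_1^{\et}(\mathcal Y_{n,K}^\circ)\bigr) = \pi_1^{\et}(\mathcal U_{n,K})
\]
inside $\pi_1^{\et}(\mathcal U_K)$, and then invoke an index count. Granting this, suppose $h(g_i)$ and $h(g_j)$ lie in the same right coset of $\pi_1^{\et}(\mathcal Y_{n,K}^\circ)$; then $g_i^{-1}g_j \in h^{-1}(\pi_1^{\et}(\mathcal Y_{n,K}^\circ)) = \pi_1^{\et}(\mathcal U_{n,K})$, which forces $i=j$ by hypothesis. Hence the $p^{sn}$ elements $h(g_1),\ldots,h(g_{p^{sn}})$ represent pairwise distinct cosets of $\pi_1^{\et}(\mathcal Y_{n,K}^\circ)$, and since $[\pi_1^{\et}(\mathcal Y_K^\circ) : \pi_1^{\et}(\mathcal Y_{n,K}^\circ)] = p^{sn}$ by the discussion just preceding the lemma, these cosets exhaust the quotient.

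For the key identity, the inclusion ``$\supseteq$'' is immediate from commutativity of \eqref{Diag:fundGp}. For the reverse inclusion, the general Galois theory of \'etale fundamental groups identifies $h^{-1}(\pi_1^{\et}(\mathcal Y_{n,K}^\circ))$ with the open subgroup of $\pi_1^{\et}(\mathcal U_K)$ classifying the pulled-back cover $\mathcal Y_{n,K}^\circ \times_{\mathcal Y_K^\circ} \mathcal U_K \to \mathcal U_K$. A direct computation, using $\mathcal U \hookrightarrow \mathcal Y^\circ$, gives
\[
\mathcal Y_n^\circ \times_{\mathcal Y^\circ}\mathcal U = \mathcal Y_n\times_{\mathcal Y}\mathcal U = \mathcal U_n,
\]
so this pulled-back cover is nothing but $\mathcal U_{n,K}\to \mathcal U_K$. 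Provided $\mathcal U_{n,K}$ is connected, the open subgroup in question is precisely $\pi_1^{\et}(\mathcal U_{n,K})$, giving the desired equality.

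The only step requiring any thought is the connectedness of $\mathcal U_{n,K}$. This is implicit in the already-asserted fact that $\pi_1^{\et}(\mathcal U_{n,K})\hookrightarrow\pi_1^{\et}(\mathcal U_K)$ has index exactly $p^{sn}$; alternatively, one sees it directly from the explicit presentation $\mathcal U_n = \Spf \widehat R[T_1^{\pm 1/p^n},\ldots,T_s^{\pm 1/p^n}]^\wedge$, whose coordinate ring is an integral domain, being a free module of finite rank over the integral domain $\widehat R[T_1^{\pm 1},\ldots,T_s^{\pm 1}]^\wedge$. Thus $\mathcal U_n$ is irreducible and so is its rigid generic fibre $\mathcal U_{n,K}$, completing the plan.
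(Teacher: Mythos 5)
Your argument is, in substance, the paper's own proof: the paper fixes a geometric point of $\mathcal U_{K}$, observes that the fibre of $\pi_n\colon\mathcal Y_{n,K}^\circ\to\mathcal Y_{K}^\circ$ over it coincides with the fibre of $\mathcal U_{n,K}\to\mathcal U_{K}$, and concludes that $g_ig_j^{-1}\in\pi_1^{\et}(\mathcal U_{n,K})$ exactly when $h(g_ig_j^{-1})\in\pi_1^{\et}(\mathcal Y_{n,K}^\circ)$; this is precisely your identity $h^{-1}\bigl(\pi_1^{\et}(\mathcal Y_{n,K}^\circ)\bigr)=\pi_1^{\et}(\mathcal U_{n,K})$, proved by the same observation that the Kummer cover pulls back over $\mathcal U_K$ to $\mathcal U_{n,K}$, and the exhaustion step is in both cases the index count supplied by the sentence accompanying \eqref{Diag:fundGp}. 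So the two proofs differ only in packaging: you use the subgroup--cover dictionary, the paper uses actions on a geometric fibre.

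The one step you should repair is the ``direct'' verification of connectedness of $\mathcal U_{n,K}$: a ring that is free of finite rank over an integral domain need not be a domain (e.g.\ $A\times A$, or $A[X]/(X^{2}-T^{2})$), so that inference fails as written. The conclusion is true, but it requires an irreducibility argument: since $T_i$ vanishes to order one along the divisor $T_i=0$, it is not a $p$-th power in the relevant fraction field at each stage (recall $p$ is odd), so $X^{p^{n}}-T_i$ stays irreducible, the coordinate ring of $\mathcal U_n$ is a domain, and hence its rigid generic fibre is connected. Alternatively, you may simply drop the aside: the connectedness of $\mathcal U_{n,K}$ and $\mathcal Y_{n,K}^\circ$ (equivalently, that the vertical maps in \eqref{Diag:fundGp} are injections of index $p^{sn}$) is asserted in the text immediately before the lemma and is relied upon in exactly the same implicit way in the paper's proof, so your first citation already suffices.
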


\begin{proof}
Fix a geometric point $\bar{x}$ on $\mathcal U_{K}$ and let
$(\mathcal Y^\circ_{n,K})_{\bar{x}}=(\mathcal U_{n,K})_{\bar{x}}$
be the geometric fiber of $\pi_n$ at $\bar{x}$.
By Galois theory, one gets that $\pi_1^{\et}(\mathcal U_K)$ acts on
$(\mathcal U_{n,K})_{\bar{x}}$, $\pi_1^{\et}(\mathcal Y^\circ_{n,K})$
acts on $(\mathcal Y^\circ_{n,K})_{\bar{x}}$ and $g_ig_j^{-1}\in \pi_1^{\et}(\mathcal U_{n,K})$ if and only if $g_ig_j^{-1}$ acts on $(\mathcal U_{n,K})_{\bar{x}}$ trivially,
which is equivalent to $h(g_ig_j^{-1})\in \pi_1^{\et}(\mathcal Y^\circ_{n,K})$.
Hence $\bigcup_{i=1}^{p^{sn}} h(g_i)\pi_1^{\et}(\mathcal Y_{n,K}^\circ)$ is a disjoint union and
$\pi_1^{\et}(\mathcal Y^\circ_{n,K})=\bigcup_{i=1}^{p^{sn}} h(g_i)\pi_1^{\et}(\mathcal Y_{n,K}^\circ)$.
\end{proof}

\begin{remark}
The lemma implies that Diagram \ref{Diag:fundGp} is a pushout in the category of topological groups.
Hence, giving a $\mathbb{Z}/p^n \mathbb{Z}$-representation of $\pi_1^{\et}(\mathcal Y_{K}^\circ)$ is equal to giving a $\mathbb{Z}/p^n \mathbb{Z}$-module endowed with three compatible actions of $\pi_1^{\et}(\mathcal U_{n,K})$, $\pi_1^{\et}(\mathcal Y_{n,K}^\circ)$, and $\pi_1^{\et}(\mathcal U_K)$.
\end{remark}

\begin{proposition}
\label{lem_affDLog}
Using the same notation as above.
Let $M$ be a logarithmic Fontaine-Faltings module over $(\mathcal Y,\mathcal Z )$,
then the $\mathbb{Z}_p$-local system $\mathbb{D}(M\mid_{\mathcal U})$ on $\mathcal U_K$ extends to a $\mathbb{Z}_p$-local system $\mathbb{L}_{\mathcal Y_K^\circ}$ on $\mathcal Y_K^\circ$ functorially.
\end{proposition}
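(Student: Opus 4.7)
The plan is to build $\mathbb{L}_{\mathcal{Y}_K^\circ}$ level by level as an inverse limit of $\mathbb{Z}/p^n\mathbb{Z}$-local systems on $\mathcal{Y}_K^\circ$, each obtained by descending the \emph{non-logarithmic} Fontaine-Faltings data $\mathbb{D}(M_n)$ along the ramified cover $\pi_n\colon \mathcal{Y}_n\to \mathcal{Y}$. The key observation has already been recorded in the text preceding the proposition: because $\mathrm{d}\log T_i = p^n\,\mathrm{d}\log T_i^{1/p^n}$ and $M/p^nM$ is annihilated by $p^n$, the pullback $M_n=\pi_n^*(M/p^nM)$ loses its logarithmic poles and is an honest object of $\mathcal{MF}_{[a,b]}^{\nabla,\Phi}(\mathcal{Y}_n/W)$, to which the classical Faltings $\mathbb{D}$-functor of \autoref{section FDF} applies.

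First, I would record the three compatible group actions on the $\mathbb{Z}/p^n\mathbb{Z}$-module $\mathbb{D}(M_n)$: the $\pi_1^{\et}(\mathcal{Y}_{n,K}^\circ)$-action coming from its structure as a local system on $\mathcal{Y}_{n,K}$ restricted to $\mathcal{Y}_{n,K}^\circ$; the $\pi_1^{\et}(\mathcal{U}_K)$-action on $\mathbb{D}(M/p^nM|_{\mathcal U})$ inherited through the canonical identification $\mathbb{D}(M_n)|_{\mathcal{U}_{n,K}}\cong \pi_n^*\mathbb{D}(M/p^nM|_{\mathcal U})$; and the common $\pi_1^{\et}(\mathcal{U}_{n,K})$-action underlying both. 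Here \autoref{prop_PullbackDfunctor} is essential: it tells us that the two candidate $\pi_1^{\et}(\mathcal{U}_{n,K})$-actions—one as a subgroup of $\pi_1^{\et}(\mathcal{Y}_{n,K}^\circ)$, the other as a subgroup of $\pi_1^{\et}(\mathcal{U}_K)$—actually coincide, so the two larger actions are genuinely compatible on the overlapping subgroup.

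Next, by \autoref{lem_GroDeo} together with the subsequent remark, Diagram \ref{Diag:fundGp} is a pushout in the category of topological groups, so the compatible triple assembles canonically into a continuous action of $\pi_1^{\et}(\mathcal{Y}_K^\circ)$ on $\mathbb{D}(M_n)$; this defines a $\mathbb{Z}/p^n\mathbb{Z}$-local system $\mathbb{L}_n$ on $\mathcal{Y}_K^\circ$. The reductions $M/p^{n+1}M \to M/p^nM$ induce morphisms $\mathbb{L}_{n+1}\to \mathbb{L}_n$ compatible with all the structures, and I set $\mathbb{L}_{\mathcal{Y}_K^\circ}:=\varprojlim_n \mathbb{L}_n$.

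Finally, I would verify $\mathbb{L}_{\mathcal{Y}_K^\circ}|_{\mathcal{U}_K}\cong \mathbb{D}(M|_{\mathcal U})$ by taking the inverse limit of the level-$n$ identifications, while functoriality in $M$ follows because every ingredient—pullback along $\pi_n$, the classical $\mathbb{D}$-functor, pushout descent, and inverse limit—is itself manifestly functorial. The main obstacle, as flagged above, is the compatibility verification: that the $\pi_1^{\et}(\mathcal{Y}_{n,K}^\circ)$-action obtained from viewing $M_n$ as a non-logarithmic Fontaine-Faltings module agrees, after restriction to $\pi_1^{\et}(\mathcal{U}_{n,K})$, with the $\pi_1^{\et}(\mathcal{U}_K)$-action on $\mathbb{D}(M/p^nM|_{\mathcal U})$. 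This is exactly what the pullback-compatibility statement of \autoref{prop_PullbackDfunctor} delivers, and its correct formulation is what makes the inductive construction run uniformly in $n$.
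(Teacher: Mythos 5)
Your proposal is correct and follows essentially the same route as the paper: pull back to $\mathcal Y_n$ to kill the logarithmic poles, apply the classical $\mathbb D$-functor, use \autoref{prop_PullbackDfunctor} to identify the actions on the common subgroup $\pi_1^{\et}(\mathcal U_{n,K})$, extend to a $\pi_1^{\et}(\mathcal Y_K^\circ)$-action via \autoref{lem_GroDeo} (the paper does this through the explicit coset decomposition, which is what the pushout remark encodes), and take the inverse limit over $n$. The restriction statement and functoriality are handled exactly as in the paper's proof.
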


\begin{proof}
Let $\pi_1^{\et}(\mathcal U_{K})=\bigcup_{i=1}^{p^{sn}} g_i\pi_1^{\et}(\mathcal U_{n,K})$ be a coset decomposition.
By \autoref{lem_GroDeo}, one can endow the $\mathbb{Z}/p^n \mathbb{Z}$-module $\mathbb{D}(M_n)$ with a continuous action of $\pi_1^{\et}(\mathcal Y_{K}^\circ)$
by setting $g_i g x:=g_i (g x)$ for any $g_ig\in \pi_1^{\et}(\mathcal Y_{K}^\circ)$ and $x\in \mathbb{D}(M_n)$.
Denote by $\mathbb{L}_n$ the corresponding $\mathbb{Z}/p^n \mathbb{Z}$-local system on $\mathcal Y_K^\circ$.
Its restriction on $\mathcal U_K$ is isomorphic to $\mathbb{D}(M/p^n M\mid_{\mathcal U})$ by the construction of action of $\pi_1^{\et}(\mathcal U_K)$ on $\mathbb{D}(M_n)$.
In the following commutative diagram of $\mathbb{Z}_p$-module,
\begin{equation}
\xymatrix{
\mathbb{D}(M_{n+1})\ar[r]\ar[d]& \mathbb{D}(M/p^{n+1} M \mid_{\mathcal U})\ar[d]\\
\mathbb{D}(M_n)\ar[r]& \mathbb{D}(M/p^n M \mid_{\mathcal U})
},
\end{equation}
all arrows are $\pi_1^{\et}(\mathcal Y_{n+1,K}^\circ)$ and $\pi_1^{\et}(\mathcal U_K)$-equivariant, hence $\pi_1^{\et}(\mathcal Y_{K}^\circ)$ -equivariant.
This is equivalent to giving a morphism of local system $\tau_{n+1}:\mathbb{L}_{n+1}\to \mathbb{L}_n$ on $\mathcal Y_K^\circ$ such that the diagram
\begin{equation}
\xymatrix{
\mathbb{L}_{n+1}\mid_{\mathcal U_K}\ar[d]_{\tau_{n+1}\mid_{\mathcal U_K}}\ar[r]^{\sim\qquad}& \mathbb{D}(M/p^{n+1} M \mid_{\mathcal U})\ar[d]\\
\mathbb{L}_{n}\mid_{\mathcal U_K}\ar[r]_{\;\sim\qquad}& \mathbb{D}(M/p^n M \mid_{\mathcal U})
},
\end{equation}
By taking inverse limit over the projective system
$\{\tau_{n+1}:\mathbb{L}_{n+1}\to \mathbb{L}_n\}_{n\in \mathbb{N}}$,
one gets a $\mathbb{Z}_p$-local system
$\mathbb{L}_{\mathcal Y_K^\circ}:=\varprojlim \mathbb L_n$ on $\mathcal Y_K^\circ$
such that
\[\bL_{\mathcal Y_K^\circ}\mid_{\mathcal U_K}\cong \varprojlim \mathbb{D}(M/p^{n} M \mid_{\mathcal U})=\mathbb{D}(M\mid_{\mathcal U}).\]
The functoriality of the extensions of local systems follows by the construction of $\mathbb{L}_{\mathcal Y_K^\circ}$.
\end{proof}

Using the notation as in \autoref{setup:Aff_log_FF}, let $T=\{T_1,T_2,\cdots,T_d\}$ be the local \'etale coordinate and  $b-a\leq p-2$.
\autoref{lem_affDLog} constructed a local functor
\[\mathbb{D}^{\log}_{T}: \mathcal {MF}_{[a,b]}^{\nabla,\Phi}((\mathcal Y,\mathcal Z)/W)\to \mathrm{Rep}_{\mathbb Z_p}(\pi_1^{\et}(\mathcal Y_K^\circ))\]
such that for any Fontaine-Faltings module $M$ over $(\mathcal Y,\mathcal Z)/W$,
\[\mathbb{D}^{\log}_T (M)\mid_{\mathcal U_K}=\mathbb{D} (M\mid_{\mathcal U}).\]

\begin{proposition}\label{lem_Gluecocycle}
    We use the same notation as in \autoref{setup:Aff_log_FF}. Let $T_i=\{T_1^{(i)},T_2^{(i)},\cdots,T_d^{(i)}\}$ be three \'etale local coordinates satisfying the condition in \autoref{setup:Aff_log_FF} for $i=1,2,3$. Then for any logarithmic Fontaine-Faltings module $M$ over $(\mathcal Y,\mathcal Z)/W$, there are canonical isomorphisms of $\mathbb Z_p$-local systems on $\mathcal Y_K^\circ$
    \[\psi_{ij} : \mathbb D_{T_i}^{\log}(M)\cong \mathbb D_{T_j}^{\log}(M),\quad 1\leq i,j\leq 3,\]
     satisfying the cocycle condition $\psi_{ik}=\psi_{jk}\circ \psi_{ij}$.
\end{proposition}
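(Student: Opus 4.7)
Since inverse limits along $n$ are harmless, I would first reduce to producing, for each $n$, a canonical isomorphism
\[\psi_{ij,n}\colon \mathbb D^{\log}_{T_i}(M)/p^n \xrightarrow{\ \sim\ } \mathbb D^{\log}_{T_j}(M)/p^n\]
of $\mathbb Z/p^n$-local systems on $\mathcal Y_K^\circ$, compatible in $n$. The key starting point is that both restrictions $\mathbb D^{\log}_{T_i}(M)\mid_{\mathcal U_K}$ and $\mathbb D^{\log}_{T_j}(M)\mid_{\mathcal U_K}$ are, by construction (\autoref{lem_affDLog}), canonically identified with the coordinate-free object $\mathbb D(M\mid_{\mathcal U})$. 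Hence on $\mathcal U_K$ there is a tautological $\pi_1^{\et}(\mathcal U_K)$-equivariant identification $\psi_{ij,n}\mid_{\mathcal U_K}$. The task is to promote this to a $\pi_1^{\et}(\mathcal Y_K^\circ)$-equivariant identification on all of $\mathcal Y_K^\circ$.

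To this end I would form the common refinement tower
\[\mathcal Y_n^{(i,j)}:=\mathcal Y_n^{(T_i)}\times_{\mathcal Y}\mathcal Y_n^{(T_j)},\]
obtained by adjoining $p^n$-th roots of both sets of coordinates, and let $\pi_n^{(i,j)}\colon \mathcal Y_n^{(i,j)}\to \mathcal Y$ denote the natural projection. On $\mathcal Y_n^{(i,j)}$ the pullback $(\pi_n^{(i,j)})^*(M/p^nM)$ has trivial logarithmic poles: pulling back along either factor already kills the logarithmic poles mod $p^n$, by the same calculation $\mathrm d\log T_k^{(\cdot)}=p^n\,\mathrm d\log (T_k^{(\cdot)})^{1/p^n}$ used in \autoref{lem_affDLog}. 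Hence it is an honest Fontaine-Faltings module in $\mathcal{MF}_{[a,b]}^{\nabla,\Phi}(\mathcal Y_n^{(i,j)}/W)$. Applying \autoref{prop_PullbackDfunctor} twice to the two natural projections $\mathcal Y_n^{(i,j)}\to \mathcal Y_n^{(T_i)}$ and $\mathcal Y_n^{(i,j)}\to \mathcal Y_n^{(T_j)}$ yields a canonical identification
\[\mathbb D\bigl((\pi_n^{(i,j)})^*(M/p^nM)\bigr)\cong \text{(pullback of the $T_i$-side)}\cong \text{(pullback of the $T_j$-side)}\]
of $\mathbb Z/p^n$-local systems on $\mathcal Y_{n,K}^{(i,j)}$, equivariant for $\pi_1^{\et}(\mathcal Y_{n,K}^{(i,j),\circ})$.

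The remainder is to glue this identification with the tautological one on $\mathcal U_K$. The pushout description in the remark after \autoref{lem_GroDeo} extends verbatim when the single tower $\mathcal Y_n$ is replaced by the refined tower $\mathcal Y_n^{(i,j)}$: one has an analogous push-out diagram of $\pi_1^{\et}$'s, since $\mathcal Y_n^{(i,j)}\to \mathcal Y$ is still finite flat and \'etale outside the divisor, with Galois group the product of the two Galois groups. A $\pi_1^{\et}(\mathcal Y_K^\circ)$-representation on $\mathbb D(M_n^{(i,j)})$ is thus the same as the compatible triple of the $\pi_1^{\et}(\mathcal U_K)$-action, the $\pi_1^{\et}(\mathcal Y_{n,K}^{(i,j),\circ})$-action, and the $\pi_1^{\et}(\mathcal U_{n,K}^{(i,j)})$-action on the overlap. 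The two candidate $\pi_1^{\et}(\mathcal Y_K^\circ)$-representations (coming from $T_i$ and from $T_j$) yield the same such triple by the previous paragraph, so they are canonically identified. This gives $\psi_{ij,n}$; passing to the inverse limit over $n$ produces $\psi_{ij}$.

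The cocycle condition $\psi_{ik}=\psi_{jk}\circ\psi_{ij}$ is then obtained by running exactly the same construction on the triple common refinement $\mathcal Y_n^{(i,j,k)}:=\mathcal Y_n^{(T_i)}\times_{\mathcal Y}\mathcal Y_n^{(T_j)}\times_{\mathcal Y}\mathcal Y_n^{(T_k)}$, where three applications of \autoref{prop_PullbackDfunctor} simultaneously identify all three sides with $\mathbb D\bigl((\pi_n^{(i,j,k)})^*(M/p^n M)\bigr)$, and functoriality of the pullback isomorphisms (already established inside the proof of \autoref{prop_PullbackDfunctor}) forces the triangle to commute. The main technical point I expect to require care is the verification that the push-out description of $\pi_1^{\et}(\mathcal Y_K^\circ)$ continues to hold for the refined cover $\mathcal Y_n^{(i,j)}$ (and $\mathcal Y_n^{(i,j,k)}$), i.e., that the analogue of \autoref{lem_GroDeo} goes through when the single group $\mathbb Z_p(1)^s$ of deck transformations is replaced by a fiber product of two (resp. three) such groups; this is a straightforward coset-decomposition argument along the lines of the original lemma, but is the place where the proof is least automatic.
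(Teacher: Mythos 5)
Your proposal follows essentially the same route as the paper's proof: reduce to the $p^n$-torsion case, pass to the common refinement obtained by adjoining $p^n$-th roots of both coordinate systems, identify both pullbacks with $\mathbb D$ of the pulled-back module via \autoref{prop_PullbackDfunctor}, descend by the coset/pushout argument of \autoref{lem_GroDeo} using compatibility over $\mathcal U_K$, and get the cocycle condition from the triple refinement. The only (harmless) discrepancy is that you form the fiber product of the full towers over $\mathcal Y$, which is not smooth along the divisor, so the pullback is not literally an object of $\mathcal{MF}^{\nabla,\Phi}_{[a,b]}(\mathcal Y_n^{(i,j)}/W)$; the paper sidesteps this by working with the fiber product of the punctured covers (where the divisor is removed and the relevant covers are also typically not Galois, only finite \'etale, so the descent is phrased via coset decompositions rather than a product of Galois groups), and since all identifications are only needed there, your argument goes through after this adjustment.
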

\begin{proof}
 First, since the isomorphisms in the general case can be obtained by taking inverse limit, We can, without loss of generality, assume that $M$ is killed by $p^n$ for some $n\in \mathbb N$. Let
    \[ \mathcal Y_n^{(i)}=\mathrm{Spf}\widehat{R}[(T_1^{(i)})^{\frac1{p^n}},\cdots,(T_s^{(i)})^{\frac1{p^n}}]^\wedge,\quad \mathcal{Y}_n^{(i)\circ}=\mathcal{Y}_n^{(i)}\times_{\mathcal Y}\mathcal Y^\circ,\quad \mathcal Z_n^{(i)}=\mathcal Y_n^{(i)}-\mathcal Y_n^{(i)\circ}\]
    with natural finite \'etale morphisms $\pi_n^{(i)}: \mathcal{Y}_n^{(i)\circ}\to  \mathcal Y^{\circ}$, for $i=1,2,3$. Let
    \begin{equation*}
        \xymatrix{
    \mathcal{Y}_n^{(ij)\circ}\ar[r]^{p_1}\ar[d]_{p_2}&\mathcal{Y}_n^{(i)\circ}\ar[d]^{\pi_n^{(i)}}\\
    \mathcal{Y}_n^{(j)\circ}\ar[r]_{\pi_n^{(j)}}&\mathcal{Y}^{\circ}\\
        }
    \end{equation*}
   be a fiber product with projections $p_1$ and $p_2$ and $\pi_n^{(ij)}=\pi_n^{(i)}\circ p_1=\pi_n^{(j)}\circ p_2$.  Then we have isomorphisms of $\mathbb Z_p$-local systems on $\mathcal Y_{n,K}^{(ij)\circ}$:
   \[\pi_n^{(ij)*}\mathbb D_{T_i}^{\log}(M)\cong p_1^*\mathbb D (\pi_n^{(i)*}M)\cong \mathbb D (\pi_n^{(ij)*}M)\cong p_2^*\mathbb D (\pi_n^{(j)*}M)\cong
   \pi_n^{(ij)*}\mathbb D_{T_j}^{\log}(M),\]
   where the first and fourth isomorphisms come from the construction of $\mathbb D^{\log}_{T}$ while the second and third isomorphisms come from \autoref{prop_PullbackDfunctor}. Denote by
   \[\psi'_{ij}:\pi_n^{(ij)*}\mathbb D_{T_i}^{\log}(M)\cong\pi_n^{(ij)*}\mathbb D_{T_j}^{\log}(M)\]
   the composition of these isomorphisms.
   Let $\mathcal U_{n,K}^{(ij)}$ be the preimage of $\mathcal U_{n,K}$ under the map $\pi_{n,K}^{(ij)}:\mathcal Y_{n,K}^{(ij)\circ}\to \mathcal Y_{n,K}^{\circ}$ which is induced by $\pi_{n}^{(ij)}$.
   Using the same method as in \autoref{lem_GroDeo}, we can prove that to give an isomorphism of local systems $\mathbb L_1\to \mathbb L_2$ on $\mathcal Y_K^\circ$ is equal to give two isomorphisms $\mathbb L_1\mid _{\mathcal U_K}\to \mathbb L_2\mid_{\mathcal U_K}$ on $\mathcal U_K$ and $\pi_{n,K}^{(ij)*}\mathbb L_1\to \pi_{n,K}^{(ij)*}\mathbb L_2$ on $\mathcal Y_{n,K}^{(ij)\circ}$ which are compatible on $\mathcal U_{n,K}^{(ij)}$. Thus the isomorphism of local system $\psi'_{ij}$   descends to an isomorphism
   \[\psi_{ij}: \mathbb D_{T_i}^{\log}(M)\to \mathbb D_{T_j}^{\log}(M).\]
   Let $\mathcal Y_{n}^{(ijk)\circ}:=\mathcal Y_{n}^{(i)\circ}\times_{\mathcal Y_{n}^{\circ}}\mathcal Y_{n}^{(j)\circ}\times_{\mathcal Y_{n}^{\circ}}\mathcal Y_{n}^{(k)\circ}$ with natural map $\pi_{n}^{(ijk)}:\mathcal Y_{n}^{(ijk)\circ} \to \mathcal Y_{n}^{\circ}$ and projection $q_{ij}$ to $\mathcal Y_{n}^{(i)\circ}\times_{\mathcal Y_{n}^{\circ}}\mathcal Y_{n}^{(j)\circ}$. By the construction of $\psi'_{ij}$, we have
   $q_{jk}^*(\psi'_{jk})\circ q_{ij}^*(\psi'_{ij})=q_{ik}^*(\psi'_{ik})$. Thus
   \[\pi_{n}^{(ijk)*}(\psi_{jk}\circ\psi_{ij})=\pi_{n}^{(ijk)*}\psi_{ik}\]
   on $\mathcal Y_{n,K}^{(ijk)\circ}$. Since $\pi_1^{\et}(\mathcal Y_{n,K}^{(ijk)\circ})$ is a subgroup of $\pi_1^{\et}(\mathcal Y_{K}^{\circ})$, it follows that
   \[\psi_{jk}\circ\psi_{ij}=\psi_{ik}.\]
\end{proof}

\subsubsection{\textbf{\emph{The global $\mathbb D^{\log}$-functor}}}

The gluing process is almost the same as the non-logarithmic case.

\begin{setup}
\label{setup:log_FF}
Let $Y$ be a smooth separated scheme over $W$
(not necessarily projective)
with geometrically connected generic fiber
with a relative normal crossing divisor $Z\subset Y$.
Let $U$ be the complement of $Z$ in $Y$.
We construct spaces $Z_K$, $\mathcal Z$, $\mathcal Z_K$, $U_K$, $\mathcal U$ and $\mathcal U_K$ exactly analogously to those for $Y$ in \autoref{setup:scheme_rigid1}.
Denote $\mathcal Y^\circ_K:=\mathcal Y_K-\mathcal Z_K$, $Y^\circ_K:=Y_K-Z_K$.
\end{setup}

Let $\{((\mathcal Y_i,\mathcal Z_i),\Phi_i,T_i)\}_{i\in I}$ be a covering of a logarithmic smooth pair $(\mathcal Y,\mathcal Z)$
consisting of small open affines together with local liftings of the Frobenius such that $\Phi_i^*[\mathcal Z_i]=p[\mathcal Z_i]$ and $T_i=\{T_1^{(i)},T_2^{(i)},\cdots,T_d^{(i)}\}$ a local coordinate satisfying the condition in \autoref{setup:Aff_log_FF}.

Let $\mathcal U_i=\mathcal Y_i-\mathcal Z_i$ and $\mathcal Y^\circ_{i,K}=\mathcal Y_{i,K}-\mathcal Z_{i,K}$.
Let $M$ be an object of $\mathcal {MF}_{[a,b]}^{\nabla}((\mathcal Y,\mathcal Z)/W)$, with $b-a\leq p-2$, over $(\mathcal Y,\mathcal Z)$.
By restricting to $\mathcal U$, we obtain a usual Fontaine-Faltings module $M\mid_{\mathcal U}$.
By taking Faltings' $\mathbb D$-functor, we obtain local systems $\mathbb{D} (M\mid_{\mathcal U})$ on $\mathcal U_{K}$.
For each $i\in I$, $\mathbb D(M\mid_{\mathcal U})\mid_{\mathcal U_{i,K}}$ extends to a local system $\mathbb D^{\log}_{T_i}(M\mid_{\mathcal Y_i})$ on $\mathcal Y_{i,K}^\circ$ by \autoref{lem_affDLog}.
By \autoref{lem_Gluecocycle}, there is a isomorphism of local systems
\[\psi_{ij}:\mathbb D^{\log}_{T_i}(M\mid_{\mathcal Y_i})\mid_{\mathcal Y^\circ_{i,K}\cap \mathcal Y^\circ_{j,K}}\to \mathbb D^{\log}_{T_j}(M\mid_{\mathcal Y_j})\mid_{\mathcal Y^\circ_{i,K}\cap \mathcal Y^\circ_{j,K}}\]
over $\mathcal Y^\circ_{i,K}\cap \mathcal Y^\circ_{j,K}$ satisfying the cocycle condition.
Hence, we obtain a compatible system of \'etale sheaves on $\mathcal U^\circ_{i,K}$.

Now, again using \cite[Theorem 4.2.9]{Con06}, we will globalize the $\mathbb D^{\log}$-functor.
Our goal is to construct a $p$-adic \'etale local system on $\mathcal Y^{\circ}_K$.
As usual, this is specified by a collection of finite \'etale covers of $\mathcal Y^{\circ}_K$.
Given our \emph{finite} covering $\bigsqcup(\mathcal Y_i)\rightarrow \mathcal Y$,
we obtain a finite admissible covering $\bigsqcup\mathcal Y^\circ_{i,K}\rightarrow \mathcal Y^{\circ}_K$,
which in particular is a faithfully flat cover that admits local fqpc quasi-sections in the sense of \cite[Definition 4.2.1]{Con06}.
The output of the local $\mathbb D^{\log}$ functors will specify finite \'etale covers of $\mathcal Y^\circ_{i,K}$.
Moreover, these finite \'etale covers will each admit an ample line bundle compatible with the descent data: pull back from an ample line bundle on $\mathcal Y_K$.\footnote{
Here, we use the fact that the pullback of an ample line bundle under a finite map is again ample.
}
Therefore, these finite \'etale covers glue to a cover of $\mathcal Y^{\circ}_K$ by \cite[Theorem 4.2.9]{Con06}.

\begin{theorem}
\label{algebraicalize}
We keep the same notation as in \autoref{setup:log_FF}.
\begin{enumerate}
\item
There is a functor $\mathbb D^{\log}$ from the category of logarithmic Fontaine-Faltings modules over $(\mathcal Y,\mathcal Z)$ to the category of continuous representations of $\pi_1^{\et}(\mathcal Y_K^\circ)$. This functor is compatible with usual Faltings' $\mathbb D$-functor in the following sense: for any logarithmic Fontaine-Faltings modules $M$ over $(\mathcal Y,\mathcal Z)$, there is a canonical isomorphism of continuous representation of $\pi_1^{\et}(\mathcal U_K)$:
\[\mathbb D^{\log}(M)\mid_{\mathcal U_K}\cong \mathbb D(M\mid_{\mathcal U}).\]
\item
If $ Y/W$ is proper, then the functor $\mathbb D^{\log}$ can be algebraized into a functor
from the category of logarithmic Fontaine-Faltings modules over $(\mathcal Y, \mathcal Z)$ to the category of continuous representations of $\pi_1^{\et}( Y_K^\circ)$.
\end{enumerate}
\end{theorem}

\begin{proof} By gluing the local $\mathbb D^{\log}$-functor, the first statement is clear because the constituent finite \'etale covers are compatible. If $Y/W$ is proper, then $(Y_K^\circ)^{\rm an}=\mathcal Y_K^\circ$ by \cite[Theorem A.3.1]{Con99}. Thus $\pi_1^{\et}(Y_K^\circ)\cong \pi_1^{\et}(\mathcal Y_K^{\circ})$ by \cite[Theorem 3.1]{Lut93}. This finishes the proof.
\end{proof}

In other words, we have shown the following.
Let $Y$ be a proper smooth scheme over $W$ and $Z$ be a flat relative simple normal crossings divisor on $Y$ over $W$.
Set $Y_K^{\circ}:=Y_K-Z_K$, and let $b-a\leq p-2$. Then one has a functor
$$\mathbb D^{\log}\colon \mathcal {MF}_{[a,b]}^{\nabla,\Phi}((\mathcal Y,\mathcal Z)/W)\rightarrow \mathrm{Rep}_{\mathbb Z_p}(\pi_1^{\et}(\mathcal Y_K^\circ))\rightarrow \mathrm{Rep}_{\mathbb Z_p}(\pi_1^{\et}(Y_K^\circ)),$$
where the latter is an equivalence of categories since $Y/W$ is proper.
Representations in the essential image of the functor $\mathbb D^{\log}$ are called
\emph{logarithmic crystalline representations with Hodge-Tate weights in $[a,b]$}.

\subsection{Full faithfulness of $\mathbb D^{\log}$}
\label{subsection:FF_Dlog}
Finally, we show the $\mathbb D^{\log}$-functor is fully faithful.
We note that our proof is based on the fact that the non-logarithmic $\mathbb D$-functor is fully faithful.
One may find an alternative proof of the fact that $\mathbb D$ is fully faithful in \cite[Theorem 77]{Tsu20}.

\begin{proposition}\label{prop_FullyFaithful}
The functor $\mathbb D^{\log}$ is fully faithful.
\end{proposition}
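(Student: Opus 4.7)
The plan is to deduce full faithfulness of $\mathbb D^{\log}$ from that of the ordinary $\mathbb D$-functor, using throughout the identification $\mathbb D^{\log}(M)\mid_{\mathcal U_K}\cong\mathbb D(M\mid_{\mathcal U})$ from \autoref{algebraicalize}(1).

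First I would handle faithfulness, which is quick. Given $f\colon M\to N$ with $\mathbb D^{\log}(f)=0$, restriction to $\mathcal U_K$ yields $\mathbb D(f\mid_{\mathcal U})=0$, so $f\mid_{\mathcal U}=0$ by full faithfulness of $\mathbb D$. Working on a logarithmically small chart $\mathcal Y=\Spf\widehat R$, the second bulleted property recalled in \autoref{sect:aux_cats} shows that (after reduction mod $p^n$) the underlying module of $N$ is a direct sum of modules $\widehat R/p^e$ and in particular $(T_1\cdots T_s)$-torsion-free, so restriction of sections to $\mathcal U$ is injective and $f=0$.

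For fullness, starting from $g\colon\mathbb D^{\log}(M)\to\mathbb D^{\log}(N)$, I would use full faithfulness of $\mathbb D$ on $\mathcal U_K$ to produce the unique $\phi\colon M\mid_{\mathcal U}\to N\mid_{\mathcal U}$ with $\mathbb D(\phi)=g\mid_{\mathcal U_K}$. The crucial remaining task --- and the main obstacle of the proof --- is to extend $\phi$ to a morphism $\widetilde f\colon M\to N$ of logarithmic Fontaine-Faltings modules; once this is done, $\mathbb D^{\log}(\widetilde f)$ and $g$ will agree on $\mathcal U_K$, and the already-proved faithfulness of $\mathbb D^{\log}$ promotes this to global equality.

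The extension is local, so I would work on a logarithmically small chart and build compatible lifts $\widetilde f_n\colon M/p^nM\to N/p^nN$ for each $n\geq 1$, then pass to the $p$-adic limit. The tool is the cover $\pi_n\colon\mathcal Y_n\to\mathcal Y$ used to construct $\mathbb D^{\log}$ in \autoref{section logFDF}: the pullbacks $M_n:=\pi_n^*(M/p^nM)$ and $N_n:=\pi_n^*(N/p^nN)$ are ordinary Fontaine-Faltings modules and $\pi_n^*\bigl(\mathbb D^{\log}(M)/p^n\bigr)\cong\mathbb D(M_n)$ by construction, so full faithfulness of $\mathbb D$ on $\mathcal Y_n$ applied to $\pi_n^*(g\bmod p^n)$ yields a unique $\psi_n\colon M_n\to N_n$ lifting it. The technical heart is then to descend $\psi_n$ along $\pi_n$, which is finite flat and Galois of group $G_n=(\mathbb Z/p^n\mathbb Z)^s$ over $\mathcal U$: because $g$ is globally $\pi_1^{\et}(\mathcal Y_K^\circ)$-equivariant, the pullback $\pi_n^*(g\bmod p^n)$ is $G_n$-equivariant (cf.\ \autoref{lem_GroDeo}), and uniqueness of $\psi_n$ forces $\psi_n$ itself to be $G_n$-equivariant; faithfully flat descent then produces $\widetilde f_n$. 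Its compatibility with the Hodge filtration, logarithmic connection and Frobenius is automatic, since these structures on $M/p^nM$ and $N/p^nN$ are themselves obtained by descent from $\mathcal Y_n$, and compatibility of $\{\widetilde f_n\}$ in $n$ follows from uniqueness.
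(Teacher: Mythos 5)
Your faithfulness argument is fine and is essentially the paper's: restrict to $\mathcal U_K$, use faithfulness of $\mathbb D$, and kill the defect along the divisor using that (mod $p^n$, after forgetting the connection) the underlying modules lie in $\MF{R}$ and hence are direct sums of $R/p^eR$, so restriction to $\mathcal U$ is injective. The fullness step, however, has two genuine gaps. First, the covers you want to descend along are not Galois: $K=W(k)[1/p]$ is absolutely unramified, so it contains no nontrivial $p$-power roots of unity, and therefore $\mathcal U_{n,K}\to\mathcal U_K$ (degree $p^{sn}$) admits no action of a constant group $(\mathbb Z/p^n\mathbb Z)^s$; the relevant object is the non-\'etale finite flat group scheme $\mu_{p^n}^s$, and over the divisor $\pi_n$ is not even a torsor. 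So the sentence ``uniqueness of $\psi_n$ forces $\psi_n$ to be $G_n$-equivariant; faithfully flat descent then produces $\widetilde f_n$'' does not parse as written. If you replace it by honest fppf descent of morphisms along the finite flat surjection $\pi_n$, you must check that the two pullbacks of $\psi_n$ agree on $\mathcal Y_n\times_{\mathcal Y}\mathcal Y_n$; modulo $p^n$ this fiber product is non-reduced along the divisor (already mod $p$ one has $(T_i^{1/p^n}\otimes1-1\otimes T_i^{1/p^n})^{p^n}=0$), so agreement over the \'etale locus $\mathcal U$ — which is all that uniqueness gives you — does not suffice unless you prove the relevant modules have no $(T_1\cdots T_s)$-torsion. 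That torsion-freeness along the divisor is exactly the $\MF{R}$-structure input on which the paper's fullness proof rests, and your proposal never invokes it in this step.

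Second, the claim that compatibility of the descended map with the Hodge filtration, logarithmic connection and Frobenius is ``automatic, since these structures are obtained by descent from $\mathcal Y_n$'' is false in the crucial log directions: the comparison map $\pi_n^*\Omega_{\mathcal Y}(\log\mathcal Z)\to\Omega_{\mathcal Y_n}(\log\mathcal Z_n)$ sends $\mathrm{d}\log T_i$ to $p^n\,\mathrm{d}\log T_i^{1/p^n}$, which vanishes modulo $p^n$ — this is precisely why $M_n$ loses its log poles — so horizontality of $\psi_n$ over $\mathcal Y_n$ gives no control of the $\mathrm{d}\log T_i$-components of the defect $\nabla_{N}\circ\widetilde f_n-(\widetilde f_n\otimes1)\circ\nabla_{M}$; likewise the Frobenius on $M_n$ is the pullback $\varphi$ twisted by a Taylor isomorphism, so $\varphi$-compatibility does not descend formally. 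Both points again force you back to the argument ``the defect vanishes on $\mathcal U$ and the target has no $(T_1\cdots T_s)$-torsion.'' By contrast, the paper avoids the covers altogether in this step: it produces $g_U$ over $\mathcal U$ from full faithfulness of $\mathbb D$, and proves integrality $g_U(V_1)\subset V_2$ by comparing the lattices $N\subseteq N'$ inside $V_2\otimes_R R[\frac1{T_1\cdots T_s}]$ in the abelian category $\MF{R}$, where the quotient is a direct sum of $R/p^{e_i}R$ and vanishes because it vanishes after localization. Your route can likely be repaired, but only by importing that same key idea, so as it stands the proof is incomplete.
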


\begin{proof}
We first reduce the statement to the local case.
Suppose the statement holds for the local case,
i.e., for logarithmically small affine schemes.
Let $(\mathcal U,\mathcal U\cap \mathcal Z)\subset (\mathcal Y,\mathcal Z)$ be a logarithmically small affine open subset in $\mathcal Y$.

\begin{itemize}

\item
Let $f\colon M\rightarrow N$ be a morphism in
$\mathcal {MF}_{[a,b]}^{\nabla,\Phi}((\mathcal Y,\mathcal Z)/W)$.
As the global logarithmic $\mathbb D$-functor is glued from the local logarithmic $\mathbb D$-functors,
one can identify their underlying (finite-dimensional) $\mathbb Z_p$-modules
\[\mathbb D_{(\mathcal U,\mathcal U\cap\mathcal Z)}^{\log}(M\mid_{(\mathcal U,\mathcal U\cap\mathcal Z)}) \cong \mathbb D^{\log}(M),\ \mathbb D_{(\mathcal U,\mathcal U\cap\mathcal Z)}^{\log}(N\mid_{(\mathcal U,\mathcal U\cap\mathcal Z)}) \cong \mathbb D^{\log}(N)\]
and also the underlying morphisms of $\mathbb Z_p$-modules
\[ \mathbb D_{(\mathcal U,\mathcal U\cap\mathcal Z)}^{\log}(f\mid_{(\mathcal U,\mathcal U\cap\mathcal Z)}) \cong \mathbb D^{\log}(f),\]
where we use the lower index $(\mathcal U,\mathcal U\cap\mathcal Z)$ to distinguish the global and local logarithmic $\mathbb D$-functors.
We show that $\mathbb D_{(\mathcal U,\mathcal U\cap\mathcal Z)}^{\log}$ being faithful for any $\mathcal U$ implies that $\mathbb D^{\log}$ is faithful.
Suppose $\mathbb D^{\log}(f)=0$.
Then $\mathbb D_{(\mathcal U,\mathcal U\cap\mathcal Z)}^{\log}(f\mid_{(\mathcal U,\mathcal U\cap\mathcal Z)})=0$.
By the faithfulness of $\mathbb D_{(\mathcal U,\mathcal U\cap\mathcal Z)}^{\log}$,
for any $\mathcal U$ we have that the restriction $f\mid_{(\mathcal U,\mathcal U\cap\mathcal Z)}=0$.
Thus $f=0$. This implies $\mathbb D^{\log}$ is faithful.

\item
Let $M_1$ and $M_2$ be two logarithmic Fontaine-Faltings modules.
For any $\pi_1^{\et}(\mathcal Y^\circ_K)$-equivariant morphism $\alpha\colon \mathbb D^{\log}(M_2) \rightarrow \mathbb D^{\log}(M_1)$,
locally over each small affine open subset $\mathcal U$,
one gets a unique homomorphism by our assumption that $\mathbb D^{\log}$ is fully faithful over small open affines:
\[f_{\mathcal U}\colon M_1\mid_{(\mathcal U,\mathcal U\cap\mathcal Z)} \rightarrow M_2\mid_{(\mathcal U,\mathcal U\cap\mathcal Z)}\]
such that
$\mathbb D^{\log}_{(\mathcal U,\mathcal U\cap\mathcal Z)}(f_{\mathcal U}) = \alpha\mid_{\mathcal U}$.
By the uniqueness, over the overlap $\mathcal U\cap\mathcal U'$,
one deduces the following:
between any two small affine subsets $\mathcal U$ and $\mathcal U'$, one has equalities:
\[f_{\mathcal U}\mid_{\mathcal U\cap \mathcal U'} = f_{\mathcal U'}\mid_{\mathcal U\cap \mathcal U'}.\]
Thus the local morphisms $f_{\mathcal U}$'s can be glued into a global morphism $f\colon M_1\rightarrow M_2$ such that $\mathbb D^{\log}_{(\mathcal Y,\mathcal Z)}(f) = \alpha$.
It follows that $\mathbb D^{\log}$ is fully faithful.
\end{itemize}

We are therefore left to show that the \emph{local} $\mathbb D^{\log}$-functor is fully faithful.
Let $(Y,Z)$ be a logarithmically small affine, with $Y=\mathrm{Spec}(R)$,
with an \'etale map $W[T_1,\cdots,T_d]\rightarrow R$ over $W$ so that
$Z$ is the divisor defined by $T_1\cdots T_s=0$.
Let $U$ be the complement of $Z$ in $Y$.
We will use some basic facts about $\MF{R}$, as in \autoref{sect:aux_cats}.
We will need one further crucial fact:
\begin{itemize}
\item
Let $(V,\nabla,\Fil,\phi)\in \mathcal{MF}_{[a,b]}^{\nabla}((\mathcal Y,\mathcal Z))$.
Then for every $n\geq 1$, the triple $(V/p^n,\Fil,\phi)$ is an object in $\MF{R}$.
\end{itemize}
This fact directly follows from the definition of logarithmic Fontaine-Faltings modules.

We will first use the fact that $\mathbb D$ is faithful to show $\mathbb D^{\log}$ is also faithful.
Let $f\colon M_1\rightarrow M_2$ be a morphism between two logarithmic Fontaine-Faltings modules over $(\mathcal Y,\mathcal Z)$.
Then we may identify these two $\mathbb Z_p$-morphisms $\mathbb D^{\log}(f)\mid_{\mathcal U_K}$ and $\mathbb D(f\mid_{\mathcal U})$
via \autoref{lem_affDLog}
\begin{equation*}
\xymatrix@C=2cm{
\mathbb D^{\log}(M_2)\mid_{\mathcal U_K} \ar[d]^{\cong} \ar[r]^-{\mathbb D^{\log}(f)\mid_{\mathcal U_K}} & \mathbb D^{\log}(M_1)\mid_{\mathcal U_K} \ar[d]^{\cong}\\
\mathbb D(M_2\mid_{\mathcal U}) \ar[r]^-{\mathbb D(f\mid_{\mathcal U})} & \mathbb D(M_1\mid_{\mathcal U}) \\
}
\end{equation*}
Suppose $\mathbb D^{\log}(f)=0$.
Then $\mathbb D(f\mid_{\mathcal U})=0$.
Since $\mathbb D$ is faithful, $f\mid_{\mathcal U}=0$.
By forgetting the connection, $f\text{ mod }p^n$ is a morphism in $\MF{R}$ for every $n$.
Since $\MF{R}$ is abelian, the kernel and cokernel of $f \text{ mod }p^n$ are also contained in $\MF{R}$. We only need to show they vanish.
Since their underlying modules are the direct sum of $R/p^eR$, the vanishing of their restrictions on $\mathcal U$ implies that they vanish.
Letting $n\rightarrow \infty$, we deduce that $f=0$.

Secondly, we use the fact that $\mathbb D$ is full to show that $\mathbb D^{\log}$ is also full.
Let $\alpha\colon \mathbb D^{\log}(M_2)\rightarrow \mathbb D^{\log}(M_1)$ be a $\pi_1^{\et}(\mathcal Y^\circ_K)$-equivariant morphism;
we need to construct a morphism
\[f\colon M_1\rightarrow M_2\]
such that $\alpha=\mathbb D^{\log}(f)$.
Without loss of generality, we may assume $M_1$ and $M_2$ are both killed by $p^n$,
because in the general case we may take inverse limits to get the morphism we need.
In this case, $\mathbb D^{\log}(M_i)=\mathbb D(M_i)$ for $i=1,2$.

By restriction, we have a $\pi_1^{\et}(\mathcal U_K)$-equivariant morphism
\[\alpha\mid_{\mathcal U}\colon \mathbb D(M_2\otimes_R R[\frac1{T_1\cdots T_s}]) \rightarrow \mathbb D(M_1\otimes_R R[\frac1{T_1\cdots T_s}]).\]
Since $\mathbb D$ is fully faithful, there exists a unique morphism between Fontaine-Faltings modules
\[g_U\colon M_1\otimes_R R[\frac1{T_1\cdots T_s}] \rightarrow M_2\otimes_R R[\frac1{T_1\cdots T_s}]\]
such that
\[\mathbb D(g_U) = \alpha \mid_{\mathcal U}.\]
(Here, it is crucial that $M_1$ and $M_2$ are torsion.)
Write $M_1$ and $M_2$ in quadruple form $M_1=(V_1,\nabla_1,\Fil_1,\phi_1)$ and $M_2=(V_2,\nabla_2,\Fil_2,\phi_2)$.
To prove the restriction of $g_U$ onto $M_2$ induces a morphism $M_1\rightarrow M_2$, it is enough to show $g_U(V_1)\subset V_2$.
This is because $g_U$ is already compatible with the filtration, connection, and Frobenius structures,
which in turn follows from the fact that $g_U$ is a morphism in $\mathcal{MF}^{\nabla}_{[a,b]}(\mathcal U)$. We denote
\[N:= g_U(V_1\otimes_RR[\frac1{T_1\cdots T_s}])\cap V_2 \subseteq V_2\]
and
\[N':=g_U(V_1)+N\subseteq V_2\otimes_RR[\frac1{T_1\cdots T_s}].\]
Since the localization functor commutes with intersection and direct sum,
\begin{equation}\label{equ:localization}
N\otimes_RR[\frac1{T_1\cdots T_s}] = g_U(V_1\otimes_R R[\frac1{T_1\cdots T_s}]) = N'\otimes_R R[\frac1{T_1\cdots T_s}].
\end{equation}
Consider the restriction filtrations
\[\Fil_{N} = \Fil_2\mid_{N} \text{ and } \Fil_{N'} = \Fil_2\mid_{N'}\]
Since $g_U$ is compatible with Frobenius structure, one can restrict the Frobenius structure onto $N$ and $N'$
\[\phi_{N} = \phi_2\mid_{N} \text{ and } \phi_{N'} = \phi_2\mid_{N'}\]
Thus we get an injective morphism in $\MF{R}$
\[(N,\Fil_{N},\phi_{N}) \subseteq (N',\Fil_{N'},\phi_{N'}).\]
Since $\mathcal{MF}(R)$ is an abelian category, the quotient $(N',\Fil_{N'},\phi_{N'})/(N,\Fil_{N},\phi_{N})$ is also contained in $\MF{R}$.
Thus there exist finite non-negative integers $e_1,\cdots,e_r$ such that
\[N'/N\cong \bigoplus\limits_{i=1}^r R/p^{e_i}R.\]
So one has $N'/N\mid_{\mathcal U} = \bigoplus\limits_{i=1}^r R[\frac1{T_1\cdots T_s}] /p^{e_i}R[\frac1{T_1\cdots T_s}]$. On the other hand, according to \autoref{equ:localization}, one has
\[N'/N\mid_{\mathcal U} = N'\otimes_R R[\frac1{T_1\cdots T_s}]/N\otimes_R R[\frac1{T_1\cdots T_s}]=0.\]
Thus $e_i=0$ for all $i=1,\cdots,r$. So we have $N'/N=0$. This implies that $g_U(V_1)\subset V_2$, and we are done.
\end{proof}


\begin{thebibliography}{SYZ22}

\bibitem[AZ21]{AcZd21}
Piotr Achinger and Maciej Zdanowicz.
\newblock Global frobenius liftability i.
\newblock {\em Journal of the European Mathematical Society}, 23:2601--2648, 04
  2021.

\bibitem[BG98]{BG98}
Siegfried Bosch and Ulrich G\"{o}rtz.
\newblock Coherent modules and their descent on relative rigid spaces.
\newblock {\em J. Reine Angew. Math.}, 495:119--134, 1998.

\bibitem[Con99]{Con99}
Brian Conrad.
\newblock Irreducible components of rigid spaces.
\newblock {\em Annales de l'Institut Fourier}, 49(2):473--541, 1999.

\bibitem[Con06]{Con06}
Brian Conrad.
\newblock Relative ampleness in rigid geometry.
\newblock {\em Ann. Inst. Fourier}, 56(6):1049--1126, 2006.

\bibitem[Fal89]{Fal89}
Gerd Faltings.
\newblock Crystalline cohomology and {$p$}-adic {G}alois-representations.
\newblock In {\em Algebraic analysis, geometry, and number theory ({B}altimore,
  {MD}, 1988)}, pages 25--80. Johns Hopkins Univ. Press, Baltimore, MD, 1989.

\bibitem[Fal99]{Fal99}
Gerd Faltings.
\newblock Integral crystalline cohomology over very ramified valuation rings.
\newblock {\em J. Amer. Math. Soc.}, 12(1):117--144, 1999.

\bibitem[FL82]{FoLa82}
Jean-Marc Fontaine and Guy Laffaille.
\newblock Construction de repr\'esentations {$p$}-adiques.
\newblock {\em Ann. Sci. \'Ecole Norm. Sup. (4)}, 15(4):547--608 (1983), 1982.

\bibitem[Fon82]{Fon82}
Jean-Marc Fontaine.
\newblock Sur certains types de repr\'{e}sentations {$p$}-adiques du groupe de
  {G}alois d'un corps local; construction d'un anneau de {B}arsotti-{T}ate.
\newblock {\em Ann. of Math. (2)}, 115(3):529--577, 1982.

\bibitem[Fon83]{Fon82Coh}
Jean-Marc Fontaine.
\newblock Cohomologie de de {R}ham, cohomologie cristalline et
  repr\'{e}sentations {$p$}-adiques.
\newblock In {\em Algebraic geometry ({T}okyo/{K}yoto, 1982)}, volume 1016 of
  {\em Lecture Notes in Math.}, pages 86--108. Springer, Berlin, 1983.

\bibitem[Gro61]{Gro61EGA3}
Alexander Grothendieck.
\newblock \'el\'ements de g\'eom\'etrie alg\'ebrique : {III.} {\'etude}
  cohomologique des faisceaux coh\'erents, {Premi\`ere} partie.
\newblock {\em Publications Math\'ematiques de l'IH\'ES}, 11:5--167, 1961.

\bibitem[Kat89]{Kato88}
Kazuya Kato.
\newblock Logarithmic structures of {F}ontaine-{I}llusie.
\newblock In {\em Algebraic analysis, geometry, and number theory ({B}altimore,
  {MD}, 1988)}, pages 191--224. Johns Hopkins Univ. Press, Baltimore, MD, 1989.

\bibitem[LSZ19]{LSZ13a}
Guitang Lan, Mao Sheng, and Kang Zuo.
\newblock Semistable {H}iggs bundles, periodic {H}iggs bundles and
  representations of algebraic fundamental groups.
\newblock {\em J. Eur. Math. Soc. (JEMS)}, 21(10):3053--3112, 2019.

\bibitem[Lü93]{Lut93}
W.~Lütkebohmert.
\newblock Riemann's existence problem for a {$p$}-adic field.
\newblock {\em Invent. Math.}, 111(2):309--330, 1993.

\bibitem[SYZ22]{SYZ22}
Ruiran Sun, Jinbang Yang, and Kang Zuo.
\newblock Projective crystalline representations of \'{e}tale fundamental
  groups and twisted periodic {H}iggs--de {R}ham flow.
\newblock {\em J. Eur. Math. Soc. (JEMS)}, 24(6):1991--2076, 2022.

\bibitem[Tsu96]{Tsu96}
Takeshi Tsuji.
\newblock Syntomic complexes and $p$-adic vanishing cycles.
\newblock {\em J. Reine Angew. Math.}, 472:69--138, 1996.

\bibitem[Tsu99]{Tsu99}
Takeshi Tsuji.
\newblock {$p$-adic \'etale cohomology and crystalline cohomology in the
  semi-stable reduction case}.
\newblock {\em {Invent. Math.}}, 137(2):233--411, 1999.

\bibitem[Tsu20]{Tsu20}
Takeshi Tsuji.
\newblock Crystalline {$\mathbb{Z}_p$}-{R}epresentations and
  {$\mathbb{A}_{\text{inf}}$}-{R}epresentations with {F}robenius.
\newblock In {\em {$p$-adic Hodge Theory. Proceedings of a symposium}}, page
  161–319. Springer International Publishing, 2020.

\end{thebibliography}
\end{document}